\newtheorem{theorem}{Theorem}[section]
\newtheorem{proposition}[theorem]{Proposition}
\newtheorem{lemma}[theorem]{Lemma}
\newtheorem{corollary}[theorem]{Corollary}
\theoremstyle{definition}
\newtheorem{definition}[theorem]{Definition}
\newtheorem{remark}[theorem]{Remark}
\newcommand{\ZZ}{ \ensuremath{\mathbb{Z}}}
\newcommand{\lk}{{\mathrm{lk}}}
\def\cocoa{{\hbox{\rm C\kern-.13em o\kern-.07em C\kern-.13em o\kern-.15em A}}}
\newcommand{\FF}{ \ensuremath{\mathbb{F}}}
\begin{document}

\title[On stacked triangulated manifolds]{On stacked triangulated manifolds}

\author{Basudeb Datta}
\address{Basudeb Datta, Department of Mathematics, Indian Institute of Science, Bangalore 560\,012, India. Email: {\em dattab@math.iisc.ernet.in}}

\author{Satoshi Murai}
\address{Satoshi Murai, Department of Pure and Applied Mathematics,
Graduate School of Information Science and Technology, Osaka University, Toyonaka, Osaka, 560-0043, Japan. Email: {\em s-murai@ist.osaka-u.ac.jp }}


\vspace{-3mm}

\begin{abstract}
We prove two results on stacked triangulated manifolds in this paper: (a) every stacked triangulation of a
connected manifold with or without boundary is obtained from a simplex or the boundary of a simplex by certain
combinatorial operations; (b) in dimension $d \geq 4$,
if $\Delta$ is a tight connected closed homology $d$-manifold whose $i$th homology vanishes for $1 < i < d-1$, then $\Delta$ is a stacked triangulation of a manifold.
These results give affirmative
answers to questions posed by Novik and Swartz and by Effenberger.
\end{abstract}

\maketitle

\vspace{-5mm}

\noindent {\small {\em MSC 2010\,:} 57Q15, 57R20, 05C40.

\noindent {\em Keywords:} Stacked manifolds; Triangulations of 3-manifolds; Tight triangulations.}

\vspace{-3mm}

\section{Introduction}

Stacked triangulations of spheres are of fundamental interest, in particular in the study of convex polytopes and
triangulations of spheres. Recently, the notion of stackedness was extended to triangulations of manifolds in
\cite{MN}. In this paper, we prove two results on stacked triangulations of manifolds.

We say that a  simplicial complex $\Delta$ is a {\em triangulation} of a manifold $M$ if its geometric carrier
$|\Delta|$ is homeomorphic to $M$. A triangulation of a $d$-manifold with non-empty boundary is said to be {\em
stacked} if all its interior faces have dimension $\geq d-1$. A triangulation of a closed $d$-manifold (that is,
a compact $d$-manifold without boundary) is said to be {\em stacked} if it is the boundary of a stacked triangulation of
a $(d+1)$-manifold. A triangulation of a $d$-manifold is said to be {\em locally stacked} if each vertex link is
a stacked triangulation of the $(d-1)$-sphere or the $(d-1)$-ball.

Kalai \cite{Ka} proved that, for $d \geq 4$, every locally stacked triangulation of a connected closed
$d$-manifold can be obtained from the boundary of a $(d+1)$-simplex by certain combinatorial operations. This
result does not hold for $3$-manifolds since there are triangulations of $3$-manifolds which are locally stacked
but cannot be obtained by these operations (see e.g.\ \cite[Example 6.2]{BDS2}). On the other hand, since the
stackedness and the locally stackedness are equivalent in dimension $\geq 4$ \cite{BDkst,MN}, Kalai's result also
characterizes stacked triangulations of connected closed manifolds of dimension $\geq 4$. We give a similar
characterization for stacked triangulations of manifolds with boundary (Theorem
\ref{theo:main.3.12}). As a consequence, we generalize the result of Kalai to stacked triangulations of closed
manifolds of dimension $\geq 2$ (Corollary \ref{cor:main.3.13}). This result and a recent result of Bagchi
\cite{Bag} solve a question posed  by Novik and Swartz \cite[Problem 5.3]{NS}.

Our second result is about an equivalence of tightness and tight-neighborliness. Let $\widetilde H_i(\Delta;
\FF)$ be the $i$th reduced homology group of a topological space (or a simplicial complex) $\Delta$ with
coefficients in a field $\FF$. The number $\beta_i(\Delta; \FF):= \dim_\FF \widetilde H_i(\Delta; \FF)$ is called
the {\em $i$th Betti number} of $\Delta$ with respect to $\FF$. For a simplicial complex $\Delta$ on the vertex
set $V$, we write $\Delta[W] = \{\alpha \in \Delta: \alpha \subseteq W\}$ for its induced subcomplex on $W
\subseteq V$. A simplicial complex $\Delta$ on the vertex set $V$ is said to be {\em $\FF$-tight} if it is
connected and the natural map $\widetilde H_i(\Delta[W];\FF) \to \widetilde H_i(\Delta;\FF)$ induced by the
inclusion map is injective for all $W \subseteq V$ and for all $i\geq 0$. We simply say that a simplicial complex
is {\em tight} if it is $\FF$-tight for some field $\FF$. See \cite{Ku,KL} for background and motivations of
tightness.  A simplicial complex is said to be {\em neighborly} if each pair of vertices forms a face.
Note that a tight simplicial complex is  neighborly (cf.\ \cite{BDtight}).

An $n$-vertex triangulation $\Delta$ of a closed manifold of dimension $d \geq 3$ is said to be {\em
tight-neighborly} if $\binom{n -d -1}{2} = \binom{d+ 2}{2} \beta_1(\Delta; \ZZ/2\ZZ)$. This condition is known to
be equivalent to saying that $\Delta$ is stacked and neighborly (cf.\ Section 5). Tight-neighborliness was introduced by Lutz, Sulanke and Swartz. They conjectured that
tight-neighborly triangulations are $(\ZZ/2 \ZZ)$-tight \cite[Conjecture 13]{LSS}. The conjecture was solved by
Effenberger \cite[Corollary 4.4]{Ef} in dimension $\geq 4$ and by Burton, Datta, Singh and Spreer \cite[Corollary
1.3]{BDSS} in dimension $3$. On the other hand, Effenberger \cite[Question 4.5]{Ef} asked if the converse of this
property holds for triangulations of connected sums of $S^{d-1}$-bundles over $S^1$ when $d \geq 4$.

We answer Effenberger's question affirmatively.
More generally, we prove that, in dimension $d \geq 4$, every tight, closed, orientable, $\FF$-homology $d$-manifold with $\beta_i(\Delta;\FF)=0$ for $1 < i <d-1$, is tight-neighbourly (Corollary \ref{cor4.4}). This result and Effenberger's result say
that, for triangulations of connected sums of $S^{d-1}$-bundles over $S^1$ with $d \geq 4$, tightness is
equivalent to tight-neighborliness. Also, since tight-neighborly triangulations are vertex minimal
triangulations, the result solves a special case of a conjecture of K\"uhnel and Lutz \cite[Conjecture 1.3]{KL}
which states that every tight combinatorial triangulation is vertex minimal.

This paper is organized as follows. In Section 2, we give a few basic definitions.
In Section 3, we define an analogue of a combinatorial handle addition for homology manifolds with boundary and study its basic properties. In Section 4, we present a
combinatorial characterization of stacked triangulated manifolds with and without boundary. In Section 5, we study the stackedness of tight triangulations.

\section{Preliminaries}

Recall that a {\em simplicial complex} is a collection of finite sets (sets of {\em vertices}) such
that every subset of an element is also an element. All simplicial complexes here are finite and non-empty.  For
$i \geq 0$, the elements of size $i+1$ are called the {\em $i$-faces} (or {\em faces of
dimension $i$}) of the complex. The empty set $\emptyset$ is a face (of dimension $-1$) of every simplicial
complex. For a simplicial complex $\Delta$, let $f_i(\Delta)$ be the number of $i$-faces of $\Delta$. The maximum
$k$ such that $\Delta$ has a $k$-simplex is called the {\em dimension} of $\Delta$ and is denoted by
$\dim(\Delta)$. A maximal face (under inclusion) in $\Delta$ is called a {\em facet} of $\Delta$. If $\sigma$ is
a face of $\Delta$ then the {\em link} of $\sigma$ in $\Delta$ is the subcomplex
\begin{align}
\lk_\Delta(\sigma) & =\{ \tau \setminus \sigma : \sigma \subseteq \tau \in \Delta\}. \nonumber
\end{align}

We are mainly interested in triangulations of manifolds,
but we actually consider slightly more general objects called homology manifolds.
For a field $\FF$, a simplicial complex $S$ of dimension $d$ is said to be an {\em $\FF$-homology $d$-sphere} if,
for each face $\sigma$ of dimension $i \geq -1$, $\lk_S(\sigma)$ has the same $\FF$-homologies as the
$(d-i-1)$-sphere. A simplicial complex $B$ of dimension $d$ is said to be an {\em $\FF$-homology $d$-ball} if (i)
$B$ has trivial reduced $\FF$-homologies, (ii) for each face $\sigma$ of dimension $i \leq d-1$, the reduced
$\FF$-homologies of $\lk_B(\sigma)$ are trivial or the same as those of the ($d-i-1)$-sphere and (iii) the
boundary
\begin{align}
\label{boundary}
\partial B =\{\sigma \in B \, : -1<\dim(\sigma) <d \mbox{ and } \widetilde H_{d-\dim(\sigma)-
1}(\lk_B(\sigma); \FF)=0 \}\cup \{\emptyset\}
\end{align}
is an $\FF$-homology $(d-1)$-sphere. A simplicial complex is said
to be an {\em $\FF$-homology $d$-manifold} if each vertex link is either an $\FF$-homology $(d-1)$-sphere or an $\FF$-homology
$(d-1)$-ball. Note that a triangulation of a $d$-manifold is an $\FF$-homology $d$-manifold for every field $\FF$.

In the rest of the paper,
we fix a field $\FF$ and by a {\em homology manifold/ball/sphere} we shall mean an $\FF$-homology manifold/ball/ sphere.
We define the {\em boundary} $\partial \Delta$ of a homology $d$-manifold $\Delta$ in the same way as in \eqref{boundary}.
If $\partial \Delta=\{\emptyset\}$, then $\Delta$ is called a {\em closed homology $d$-manifold} (or a {\em
homology $d$-manifold without boundary}), otherwise $\Delta$ is called a {\em homology $d$-manifold with
boundary}. If $\Delta$ is a homology $d$-manifold with boundary, then $\partial \Delta$ becomes a closed homology
$(d-1)$-manifold.
We say that a connected, closed, $\FF$-homology $d$-manifold $\Delta$ is {\em $\FF$-orientable} (or simply, {\em orientable}) if $\widetilde H_d(\Delta;\FF)$ $\cong\FF$.
Similarly, a connected homology $d$-manifold $\Delta$ with boundary is \textit{orientable} if $H_d(\Delta,\partial \Delta;\FF) \cong \FF$.
We note the following easy fact.

\begin{lemma} \label{lemma:2.1}
Let $\Delta$ be an orientable homology $d$-manifold with boundary. If $\Delta$ has trivial reduced homologies then $\Delta$ is a homology $d$-ball.
\end{lemma}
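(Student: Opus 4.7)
My plan is to verify the three defining conditions of an $\FF$-homology $d$-ball. Condition (i) is the hypothesis. For condition (ii), I would invoke the standard fact that in any $\FF$-homology $d$-manifold the link of every $i$-face is itself an $\FF$-homology $(d-i-1)$-sphere or an $\FF$-homology $(d-i-1)$-ball; this is proved by induction on $i$ using the identity $\lk_\Delta(\sigma)=\lk_{\lk_\Delta(v)}(\sigma\setminus\{v\})$ for any $v\in\sigma$, starting from the hypothesis on vertex links. In either case the reduced homology of the link is either trivial or that of the $(d-i-1)$-sphere, which is exactly condition (ii).

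The real content is condition (iii), that $\partial\Delta$ is an $\FF$-homology $(d-1)$-sphere. The paper already notes that $\partial\Delta$ is a closed homology $(d-1)$-manifold; hence every vertex link in $\partial\Delta$ must actually be a homology $(d-2)$-sphere, since if one were a homology $(d-2)$-ball its vertex would lie in $\partial(\partial\Delta)$, which is empty. The iterated-link argument above then ensures that every face link inside $\partial\Delta$ is a homology sphere of the correct dimension. It therefore suffices to prove that $\partial\Delta$ itself has the reduced homology of $S^{d-1}$.

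To compute $\widetilde H_*(\partial\Delta;\FF)$ I would apply Lefschetz duality to the orientable homology $d$-manifold with boundary $\Delta$, which gives $H_i(\Delta,\partial\Delta;\FF)\cong H^{d-i}(\Delta;\FF)$; over the field $\FF$, universal coefficients further identify this with $H_{d-i}(\Delta;\FF)$. By hypothesis $\widetilde H_j(\Delta;\FF)=0$ for every $j$, so $H_i(\Delta,\partial\Delta;\FF)$ equals $\FF$ when $i=d$ and vanishes otherwise. Feeding this into the long exact sequence of the pair $(\Delta,\partial\Delta)$ and using $\widetilde H_*(\Delta;\FF)=0$ yields isomorphisms $\widetilde H_{i-1}(\partial\Delta;\FF)\cong H_i(\Delta,\partial\Delta;\FF)$ for all $i\geq 1$, so $\widetilde H_{d-1}(\partial\Delta;\FF)\cong\FF$ while all other reduced homologies of $\partial\Delta$ vanish, completing condition (iii).

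The main obstacle is invoking Lefschetz duality in the combinatorial setting of simplicial $\FF$-homology manifolds rather than topological ones. This duality is classical in the homology-manifold category and is proved via the dual block decomposition of $\Delta$, using the relative fundamental class in $H_d(\Delta,\partial\Delta;\FF)$ supplied by orientability. I would cite it rather than reproduce the proof, reducing the whole argument to a short diagram chase in the long exact sequence of the pair.
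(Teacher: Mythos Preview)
Your argument is correct and follows essentially the same route as the paper: conditions (i) and (ii) are immediate from the hypotheses and the definition of homology manifold, and condition (iii) is deduced from Poincar\'{e}--Lefschetz duality combined with the long exact sequence of the pair $(\Delta,\partial\Delta)$. The paper's proof is a terse three-line version of what you have written out in full.
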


\begin{proof}
It is clear that $\Delta$ satisfies conditions (i) and (ii) of homology balls. The fact that $\partial \Delta$ is
a homology $(d-1)$-sphere follows from the long exact sequence of the pair $(\Delta,\partial \Delta)$ and the
Poincar\'{e}--Lefschetz duality \cite[Theorem 6.2.19]{Sp}.
\end{proof}

We define the stackedness and the locally stackedness for homology manifolds in the same way as for
triangulations of manifolds. Clearly, a stacked homology manifold is locally stacked. Since any stacked homology
ball (resp., sphere) is a combinatorial ball (resp., sphere), it follows that every (locally) stacked homology manifold is a combinatorial manifold, i.e., a PL triangulation of an actual manifold.
Thus we simply call them (\textit{locally$)$ stacked manifolds} (\textit{with or without boundary}).

Next, we recall Walkup's class $\mathcal H^d$. Let $\Delta$ be a connected, closed, homology manifold and let
$\sigma$ and $\tau$ be facets of $\Delta$. We say that a bijection $\psi: \sigma \to \tau$ is {\em admissible} if
$\lk_\Delta(v) \cap \lk_\Delta(\psi(v))=\{\emptyset\}$ for each vertex $v \in \sigma$. Note that, for the
existence of an admissible bijection $\psi:\sigma \to \tau$, $\sigma$ and $\tau$ must be disjoint. For an
admissible bijection $\psi : \sigma \to \tau$, let $\Delta^\psi$ be the simplicial complex obtained from
$\Delta\setminus\{ \sigma, \tau\}$ by identifying $v$ and $\psi(v)$ for all $v \in \sigma$. The simplicial
complex $\Delta^\psi$ is said to be obtained from $\Delta$ by a {\em combinatorial handle addition}.

\begin{definition}[Walkup's class $\mathcal H^d$] \label{def:Walkup}
Let $d \geq 3$ be an integer. We recursively define the class $\mathcal H^d(k)$ as follows.
\begin{enumerate}[{\rm (a)}]
\item $\mathcal H^d(0)$ is the set of stacked triangulations of the $(d-1)$-sphere.

\item A simplicial complex $\Delta$ is in $\mathcal H^d(k +1)$ if it is obtained from a member of $\mathcal
H^d(k)$ by a combinatorial handle addition.
\end{enumerate}
The Walkup's class $\mathcal H^d$ is the union $\mathcal H^d=\bigcup_{k \geq 0} \mathcal H^d(k)$.
\end{definition}

Kalai \cite[Corollary 8.4]{Ka} proved that every connected, locally stacked, closed,  $d$-manifold
is a member of Walkup's class $\mathcal{H}^{d+1}$ when $d \geq 4$, and as a consequence it follows that
$\mathcal{H}^{d+1}$ is the
set of all (locally) stacked, connected, closed, $d$-manifolds for $d\geq 4$. Although Kalai's
result is not true for $d=3$ (see e.g.\ \cite[Example 6.2]{BDS2}), we prove that $\mathcal{H}^{d+1}$ is the set
of all connected, stacked, closed, $d$-manifolds  for $d\geq 2$.

\section{Simplicial handle addition}

In this section, we define an analogue of combinatorial handle additions for homology manifolds with boundary.
Some statements in this section will be easy for experts,
but we include all the proofs for the sake of completeness.

All homologies are with coefficients in an arbitrary field $\FF$, which is fixed throughout,
and $\widetilde{H}_i(\Delta;\FF)$ and $\beta_i(\Delta;\FF)$ will be denoted by $\widetilde{H}_i(\Delta)$ and $\beta_i(\Delta)$, respectively.
Let $\Delta$
be a homology $d$-manifold with boundary on the vertex set $V$ and let $\sigma$ and $\tau$ be facets of $\partial
\Delta$. We say that a bijection $\psi: \sigma \to \tau$ is {\em admissible} if, for every vertex $v \in \sigma$,
$\lk_\Delta(v) \cap \lk_\Delta(\psi(v))=\{\emptyset\}$.
For an admissible bijection $\psi: \sigma \to \tau$, let $\Delta^{\overline \psi}$ be the simplicial complex obtained from $\Delta$ by identifying $v$ and $\psi(v)$
for all $v \in \sigma$.
(The main difference between $\Delta^\psi$ and $\Delta^{\overline \psi}$ is that we do not remove $\sigma$ and $\tau$ for the definition of $\Delta^{\overline \psi}$.)
Thus, if we define  $\psi_+$ to be the map  from $V$ to $V \setminus \sigma$ by $\psi_+(v)=\psi(v)$ if $v \in \sigma$ and $\psi_+(v)=v$ otherwise, then we can consider $\Delta^{\overline \psi}$ on the vertex set $V \setminus \sigma$ as
\begin{align}
\Delta^{\overline \psi} & =\{ \psi_+(\alpha): \alpha \in \Delta\}. \nonumber
\end{align}
If $\Delta$ is connected, then we say that $\Delta^{\overline \psi}$ is obtained from
$\Delta$ by a {\em simplicial handle addition}. If $\Delta$ has two connected components $\Delta_1$ and
$\Delta_2$ and if $\sigma \in \Delta_1$ and $\tau \in \Delta_2$, then we write $\Delta^{\overline \psi}=\Delta_1
\cup_{\psi} \Delta_2$ and call it a {\em simplicial connected union} of $\Delta_1$ and $\Delta_2$. Below we give
some basic properties of $\Delta^{\overline \psi}$.
For $\sigma \in \Delta$, we write $\overline{\sigma}$ for the simplicial complex having a single facet $\sigma$.

\begin{lemma} \label{lemma:3.1}
Let $\Delta$ and $\Gamma$ be two homology $d$-balls. If $\Delta \cap \Gamma = \partial\Delta\cap \partial\Gamma =
\overline{\alpha}$, where $\alpha$ is a $(d-1)$-simplex,  then $\Delta \cup \Gamma$ is a homology $d$-ball.
\end{lemma}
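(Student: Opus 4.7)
I plan to verify directly the three conditions in the definition of a homology $d$-ball for $\Delta \cup \Gamma$: (i) trivial reduced homologies, (ii) the link condition at every face, and (iii) that the boundary is a homology $(d-1)$-sphere.

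Condition (i) is immediate from the Mayer--Vietoris sequence applied to the cover $\{\Delta,\Gamma\}$: the intersection $\Delta \cap \Gamma = \overline{\alpha}$ is a simplex and hence acyclic, and $\Delta,\Gamma$ are themselves acyclic as homology balls, so $\widetilde H_i(\Delta \cup \Gamma) = 0$ for all $i$. For condition (ii) I case-split on a face $\tau$ according to its position relative to $\alpha$. If $\tau \not\subseteq \alpha$ then $\tau$ lies in exactly one of $\Delta,\Gamma$ and its link in the union is unchanged, so the required homological condition is inherited. If $\tau = \alpha$, then $\alpha$ is a $(d-1)$-face of $\partial \Delta$, so the ball property forces $\lk_\Delta(\alpha)$ to be a single vertex $v_\Delta$, and likewise $\lk_\Gamma(\alpha) = \{v_\Gamma\}$; since neither vertex can lie in $\overline\alpha$, they are distinct, giving $\lk_{\Delta \cup \Gamma}(\alpha) \cong S^0$. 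In the remaining case $\tau \subsetneq \alpha$, $\tau$ is a proper subface of the boundary facet $\alpha$, so $\tau \in \partial \Delta$; combined with the ball link condition this forces $\lk_\Delta(\tau)$ to be acyclic, and similarly for $\Gamma$. A further Mayer--Vietoris on $\lk_{\Delta \cup \Gamma}(\tau) = \lk_\Delta(\tau) \cup \lk_\Gamma(\tau)$, whose intersection is the simplex $\overline{\alpha \setminus \tau}$, then gives that the whole link is acyclic.

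For condition (iii), the link computations above also pin down the boundary: $\partial(\Delta \cup \Gamma) = (\partial \Delta \cup \partial \Gamma) \setminus \{\alpha\}$. Setting $P = \partial \Delta \setminus \{\alpha\}$ and $Q = \partial \Gamma \setminus \{\alpha\}$, one has $P \cap Q = \partial \alpha$, the boundary complex of the $(d-1)$-simplex $\alpha$, which is a combinatorial $(d-2)$-sphere. The key intermediate fact is that a homology sphere with a single facet removed is acyclic: this follows from Mayer--Vietoris applied to $\partial\Delta = P \cup \overline{\alpha}$ once the connecting map $\widetilde H_{d-1}(\partial\Delta) \to \widetilde H_{d-2}(\partial\alpha)$ is identified on the fundamental class as an isomorphism. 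Thus $P$ and $Q$ are acyclic, and a second Mayer--Vietoris on $P \cup Q$ with intersection $\partial\alpha$ produces the homology of $S^{d-1}$. To upgrade this to the sphere property, for the link of each face $\sigma$ of $\partial(\Delta \cup \Gamma)$ I run the same case analysis: if $\sigma \not\subseteq \alpha$ then $\lk(\sigma)$ agrees with $\lk_{\partial \Delta}(\sigma)$ or $\lk_{\partial\Gamma}(\sigma)$, already a homology sphere of the correct dimension, while if $\sigma \subsetneq \alpha$ then $\lk(\sigma)$ has exactly the same two-piece gluing structure as $\partial(\Delta \cup \Gamma)$ itself, one dimension lower, and the two Mayer--Vietoris computations above go through verbatim.

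The main obstacle is the intermediate fact in step (iii) that deleting a facet from a homology sphere yields an acyclic complex: the Mayer--Vietoris sequence leaves a connecting map $\FF \to \FF$ whose non-vanishing must be argued geometrically via the image of the fundamental class of $\partial\Delta$. Once this is isolated, everything else is routine bookkeeping with Mayer--Vietoris over the two gluing situations that recur across dimensions.
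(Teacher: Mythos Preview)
Your argument is correct, but it follows a different route from the paper's proof. The paper proceeds by induction on $d$: it checks only the vertex links, observing that for $v\in\alpha$ the link $\lk_{\Delta\cup\Gamma}(v)=\lk_\Delta(v)\cup\lk_\Gamma(v)$ is a union of two homology $(d-1)$-balls meeting in the $(d-2)$-simplex $\overline{\alpha\setminus\{v\}}$, hence a homology $(d-1)$-ball by the induction hypothesis. This establishes that $\Delta\cup\Gamma$ is a homology $d$-manifold with boundary and trivial reduced homology, and then Lemma~\ref{lemma:2.1} (via Poincar\'e--Lefschetz duality) finishes the job without ever computing $\partial(\Delta\cup\Gamma)$ explicitly. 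Your approach instead verifies condition~(iii) by hand: you identify $\partial(\Delta\cup\Gamma)=(\partial\Delta\cup\partial\Gamma)\setminus\{\alpha\}$ and show it is a homology $(d-1)$-sphere using the intermediate fact that a homology sphere with one facet removed is acyclic. This is more elementary in that it avoids duality and any orientability check, at the cost of the extra bookkeeping and the pseudomanifold/fundamental-class argument needed to see that the connecting map $\widetilde H_{d-1}(\partial\Delta)\to\widetilde H_{d-2}(\partial\overline\alpha)$ is nonzero. Both arguments are short once the right lemma is isolated; the paper's induction packages the recursion into the link structure, whereas yours packages it into repeated Mayer--Vietoris on the boundary.
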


\begin{proof}
We use induction on $d$. The statement is obvious when $d=1$. Suppose $d >1$. Since $\Delta \cap \Gamma=
\overline \alpha$, the exactness of the Mayer--Vietoris sequence implies that $\Delta \cup \Gamma$ has trivial
reduced homology. Let $v$ be a vertex of $\Delta \cup \Gamma$. If $v \not\in \alpha$ then $\lk_{\Delta \cup
\Gamma}(v)$ is equal to either $\lk_{\Delta}(v)$ or $\lk_{\Gamma}(v)$ and hence a homology $(d-1)$-sphere or
$(d-1)$-ball. If $v\in \alpha$ then $v\in \partial \Delta \cap \partial \Gamma$ and hence $\lk_{\Delta}(v)$ and
$\lk_{\Gamma}(v)$ are homology $(d-1)$-balls and $\lk_{\Delta}(v) \cap \lk_{\Gamma}(v)= \overline{\alpha\setminus
\{v\}}$. Since $\lk_{\Delta \cup \Gamma}(v)= \lk_{\Delta}(v) \cup \lk_{\Gamma}(v)$, $\lk_{\Delta \cup \Gamma}(v)$
is a homology $(d-1)$-ball by the induction hypothesis. The lemma now follows from Lemma $\ref{lemma:2.1}$.
\end{proof}

It follows from Lemma \ref{lemma:3.1} that the simplicial connected union of two homology $d$-balls is a homology
$d$-ball.

\begin{lemma} \label{lemma:3.2}
For $d \geq 2$, let $\Delta$ be a $($not necessary connected$)$ homology $d$-manifold with boundary.
Let $\sigma$ and $\tau$ be two facets of $\partial \Delta$. If $\psi:\sigma \to \tau$ is  an admissible bijection then
\begin{enumerate}[{\rm (i)}]
\item $\Delta^{\overline \psi}$ is a homology $d$-manifold with boundary,

\item $(\beta_0(\Delta^{\overline \psi}), \beta_1(\Delta^{\overline \psi})) =(\beta_0(\Delta), \beta_1(\Delta)+1)$ or $(\beta_0(\Delta)-1, \beta_1(\Delta))$ and

\item $\Delta^{\overline \psi}$ is stacked if and only if $\Delta$ is stacked.
\end{enumerate}
\end{lemma}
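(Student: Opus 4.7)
The plan is to treat (i), (ii), and (iii) in turn; all three reduce to local computations comparing a link $\lk_{\Delta^{\overline \psi}}(\alpha)$ to the links in $\Delta$ of the preimages of $\alpha$ under $\psi_+$, with the admissibility hypothesis doing most of the work.

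For (i), I would handle the two types of vertex of $\Delta^{\overline\psi}$ separately. If $v \in V \setminus (\sigma \cup \tau)$, I first note that $\psi_+$ is injective on the vertex set of $\lk_\Delta(v)$: if both $w$ and $\psi(w)$ lay in $\lk_\Delta(v)$ for some $w \in \sigma$, then $v \in \lk_\Delta(w) \cap \lk_\Delta(\psi(w)) = \{\emptyset\}$, a contradiction. Hence $\lk_{\Delta^{\overline\psi}}(v) \cong \lk_\Delta(v)$ is a homology ball or sphere. If $v \in \tau$, then unpacking the definition of $\Delta^{\overline\psi}$ gives
\[
\lk_{\Delta^{\overline\psi}}(v) = \lk_\Delta(v) \cup \psi_+(\lk_\Delta(\psi^{-1}(v))),
\]
a union of two homology $(d-1)$-balls (since both $v$ and $\psi^{-1}(v)$ lie on $\partial\Delta$) whose intersection is precisely $\overline{\tau\setminus\{v\}}$---anything larger would produce a common face of $\lk_\Delta(v)$ and $\lk_\Delta(\psi^{-1}(v))$ and so violate admissibility---and Lemma~\ref{lemma:3.1} identifies this union as a homology $(d-1)$-ball. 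The boundary of $\Delta^{\overline\psi}$ is non-empty because for $d \geq 2$ the closed $(d-1)$-manifold $\partial\Delta$ has at least three facets, and at most two (namely $\sigma$ and $\tau$) are moved to the interior by the identification.

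For (ii), I would view $\Delta^{\overline\psi}$ as the pushout $\overline\tau \leftarrow \overline\sigma \sqcup \overline\tau \hookrightarrow \Delta$, where the left map is $\psi$ on $\overline\sigma$ and the identity on $\overline\tau$. The inclusion on the right is a cofibration, so the pushout sits in a Mayer--Vietoris sequence; contractibility of $\overline\sigma$, $\overline\tau$, and $\overline\tau$ collapses it to $\widetilde H_i(\Delta^{\overline\psi}) \cong \widetilde H_i(\Delta)$ for $i \geq 2$ together with
\[
0 \to \widetilde H_1(\Delta) \to \widetilde H_1(\Delta^{\overline\psi}) \to \FF \to \widetilde H_0(\Delta) \to \widetilde H_0(\Delta^{\overline\psi}) \to 0,
\]
the connecting map sending the generator of $\widetilde H_0(\overline\sigma \sqcup \overline\tau) \cong \FF$ to $[\sigma]-[\tau]$. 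This class vanishes iff $\sigma$ and $\tau$ are in the same component of $\Delta$, giving the two cases $(\beta_0,\beta_1+1)$ and $(\beta_0-1,\beta_1)$ claimed.

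For (iii), I would partition the non-empty faces $\alpha \in \Delta^{\overline\psi}$ into three cases: (A) $\alpha \cap \tau = \emptyset$, where the unique preimage of $\alpha$ is $\alpha \in \Delta$ itself and the injectivity argument from (i) gives $\lk_{\Delta^{\overline\psi}}(\alpha) \cong \lk_\Delta(\alpha)$; (B) $\emptyset \ne \alpha \subsetneq \tau$, where the same computation as in (i) realises the link as a union of two homology balls meeting in $\overline{\tau \setminus \alpha}$, a homology ball by Lemma~\ref{lemma:3.1}, so $\alpha \in \partial\Delta^{\overline\psi}$; and (C) $\alpha \cap \tau \ne \emptyset$ with $\alpha \not\subseteq \tau$. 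In case (C) I would show that $\alpha$ has a unique preimage $\gamma \in \Delta$: any two distinct preimages $\gamma_1, \gamma_2$ must differ at some $v \in \gamma_1 \cap \sigma$ with $\psi(v) \in \gamma_2$, and then $\gamma_1 \cap \gamma_2 \in \lk_\Delta(v) \cap \lk_\Delta(\psi(v)) = \{\emptyset\}$ forces every vertex of $\alpha$ into $\tau$, contradicting $\alpha \not\subseteq \tau$. A repeat of the injectivity argument then yields $\lk_{\Delta^{\overline\psi}}(\alpha) \cong \lk_\Delta(\gamma)$ with $\dim\gamma = \dim\alpha$. Both implications in (iii) are then routine: if $\Delta$ is stacked, any interior $\alpha \in \Delta^{\overline\psi}$ with $\dim\alpha \leq d-2$ cannot be of type (B), so (A) or (C) produces an interior face of $\Delta$ of the same dimension; conversely, an interior $\alpha \in \Delta$ with $\dim\alpha \leq d-2$ cannot sit inside $\sigma$ or $\tau$ (each in $\partial\Delta$) nor contain both $v$ and $\psi(v)$ for any $v \in \sigma$ (otherwise $\lk_\Delta(\alpha) \subseteq \{\emptyset\}$ cannot be a $(d-\dim\alpha-1)$-sphere), so $\psi_+(\alpha)$ has the same dimension and, by (A) or (C), an isomorphic link, contradicting stackedness of $\Delta^{\overline\psi}$. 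The main obstacle throughout is converting the vertex-level admissibility condition $\lk_\Delta(v) \cap \lk_\Delta(\psi(v)) = \{\emptyset\}$ into face-level statements---pinning down the intersection in (B) and uniqueness of preimage in (C)---but in each instance the recipe is the same: extract a would-be common face and invoke admissibility.
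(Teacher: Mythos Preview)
Your proposal is correct and follows essentially the same route as the paper: the link computations in (i) and (iii) are identical in spirit (the paper simply front-loads the full case analysis over all faces $\alpha\subseteq\tau$ and $\alpha\not\subseteq\tau$ into the proof of (i), then harvests (iii) in one sentence by reading off that the interior faces of $\Delta^{\overline\psi}$ are exactly $\tau$ together with the images $\psi_+(\alpha)$ of interior faces of $\Delta$), and both rely on Lemma~\ref{lemma:3.1} for the glued links. The only packaging difference is in (ii): you run Mayer--Vietoris on the pushout $\overline\tau \leftarrow \overline\sigma\sqcup\overline\tau \hookrightarrow \Delta$, whereas the paper uses the identification $\widetilde H_i(|\Delta^{\overline\psi}|)\cong\widetilde H_i(|\Delta^{\overline\psi}|,|\tau|)\cong\widetilde H_i(|\Delta|,|\sigma|\cup|\tau|)$ (contractibility of $|\tau|$, then excision) followed by the long exact sequence of the pair $(|\Delta|,|\sigma|\cup|\tau|)$---two standard derivations of the same five-term sequence.
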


\begin{proof}
(i) For every $\alpha \in \Delta^{\overline \psi}$ with $\alpha \not \subseteq \tau$, there is a unique face
$\gamma \in \Delta$ such that $\alpha=\psi_+(\gamma)$ and $\lk_{\Delta^{\overline \psi}}(\alpha)$ is
combinatorially isomorphic to $\lk_{\Delta}(\gamma)$. Thus, to prove the statement, it is enough to show that,
for every $\alpha \subseteq \tau$, $\lk_{\Delta^{\overline \psi}}(\alpha)$ is either a homology $(d-
\dim(\alpha)-1)$-sphere or $(d-\dim(\alpha)-1)$-ball. It is clear that $|\lk_{\Delta^{\overline \psi}}(\tau)|
\cong S^0$. For a proper face $\alpha$ of $\tau$, a straightforward computation implies
\begin{align}
\lk_{\Delta^{\overline \psi}}(\alpha) & = \lk_\Delta(\alpha) \cup_{\psi'} \lk_\Delta(\psi^{-1}(\alpha)), \nonumber
\end{align}
where $\psi': \psi^{-1}(\tau \setminus \alpha ) \to \tau\setminus \alpha$ is the restriction of $\psi$ to
$\psi^{-1}(\tau \setminus \alpha)$. By Lemma \ref{lemma:3.1}, $\lk_{\Delta^{\overline \psi}}(\alpha)$ is a
homology $(d-\dim(\alpha)-1)$-ball.

(ii) It is clear that $\beta_0(\Delta^{\overline \psi})= \beta_0(\Delta)-1$ if $\sigma$ and $\tau$ belong to
different connected components and $\beta_0(\Delta^{\overline \psi})= \beta_0(\Delta)$ if $\sigma$ and $\tau$ are
in the same connected component. Observe that $\widetilde H_i(|\Delta^{\overline \psi}|) \cong \widetilde
H_i(|\Delta^{\overline \psi}|,|\tau|) \cong \widetilde H_i(|\Delta|,|\sigma| \cup |\tau|)$ for all $i$. Then the
desired statement follows from the following exact sequence of pairs
\begin{eqnarray*}
\begin{array}{lllllllllllll}
 0=  \widetilde H_1(|\sigma| \cup |\tau|) &
\longrightarrow & \widetilde H_1(|\Delta|) & \longrightarrow &
\widetilde H_1(|\Delta|,|\sigma| \cup |\tau|) &  &\\
\longrightarrow  \widetilde H_0(|\sigma| \cup |\tau|) &
\longrightarrow & \widetilde H_0(|\Delta|) & \longrightarrow &
\widetilde H_0(|\Delta|,|\sigma| \cup |\tau|) & \longrightarrow & 0.
\end{array}
\end{eqnarray*}

(iii) This statement follows from the proof of (i) since it says that the interior faces of $\Delta^{\overline
\psi}$ are $\tau$ and $\psi_+(\alpha)$ for all interior faces $\alpha$ of $\Delta$.
\end{proof}

The proof of Lemma \ref{lemma:3.2}\,(i) also says that if $\Delta$ is connected then $\partial(\Delta^{\overline
\psi})=(\partial \Delta)^\psi$. Also $|\partial(\Delta_1 \cup_\psi \Delta_2)|$ is a connected sum of $|\partial
\Delta_1|$ and $|\partial \Delta_2|$.

Next, we consider the inverse of the construction of $\Delta^{\overline{\psi}}$, which we call simplicial handle
deletions.

\begin{lemma} \label{lemma:3.3}
Let $B$ be a homology $d$-ball with vertex set $V$, $\sigma$ an interior $(d-1)$-face of $B$ with $\partial
\overline \sigma \subseteq \partial B$. Then $B[{V\setminus \sigma}]$ contains exactly two connected components.
\end{lemma}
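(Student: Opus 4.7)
The plan is to prove, by induction on $d$, the following strengthening of the statement: under the lemma's hypotheses, (a) $B[V\setminus T]$ has trivial reduced $\FF$-homology for every proper subset $T\subsetneq \sigma$, and (b) $B[V\setminus \sigma]$ has the $\FF$-homology of $S^0$ and in particular consists of exactly two connected components. The strengthening (a) is essential for the argument: part (b) at dimension $d$ will be derived from part (b) at dimension $d-1$ via a Mayer--Vietoris isomorphism, and what makes that isomorphism go through is precisely the acyclicity provided by (a).

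The base case $d=1$ is immediate. Here $B$ is a path and $\sigma$ is an interior vertex, so $B[V\setminus \sigma]$ splits into two subpaths, proving (b); the only proper subset is $T=\emptyset$, giving $B$ itself, which is acyclic.

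For the inductive step, I pick any $v\in T$ and apply Mayer--Vietoris to
\[
B[V\setminus (T\setminus\{v\})] \;=\; B[V\setminus T]\;\cup\;\st_{B[V\setminus (T\setminus\{v\})]}(v),
\]
whose intersection equals $\lk_B(v)[V(\lk_B(v))\setminus (T\setminus\{v\})]$. The star is contractible, and $B[V\setminus (T\setminus\{v\})]$ is acyclic either by hypothesis (if $T\setminus\{v\}=\emptyset$) or by a sub-induction on $|T|$ applied to part (a) at dimension $d$. Hence Mayer--Vietoris yields
\[
\widetilde H_i(B[V\setminus T]) \;\cong\; \widetilde H_i\bigl(\lk_B(v)[V(\lk_B(v))\setminus (T\setminus\{v\})]\bigr).
\]

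The main technical step, and the place where I expect the only real care to be needed, is to verify that $\lk_B(v)$ together with the face $\sigma\setminus\{v\}$ again satisfies the lemma's hypotheses in dimension $d-1$, so that the outer induction hypothesis applies. Specifically, $\lk_B(v)$ is a homology $(d-1)$-ball because $\{v\}\subseteq\partial\overline\sigma\subseteq \partial B$; the face $\sigma\setminus\{v\}$ is an interior $(d-2)$-face of $\lk_B(v)$ because its link there equals $\lk_B(\sigma)\cong S^0$ (coming from $\sigma$ being interior in $B$); and $\partial\overline{\sigma\setminus\{v\}}\subseteq \partial\lk_B(v)$, since for any proper subface $\alpha$ of $\sigma\setminus\{v\}$ the face $\alpha\cup\{v\}$ is a proper subface of $\sigma$, hence lies in $\partial B$, making $\lk_B(\alpha\cup\{v\}) = \lk_{\lk_B(v)}(\alpha)$ a homology ball, which by definition places $\alpha$ in $\partial\lk_B(v)$. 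Granting this verification, the outer induction gives acyclicity of $\lk_B(v)[V(\lk_B(v))\setminus (T\setminus\{v\})]$ whenever $T\setminus\{v\}\subsetneq \sigma\setminus\{v\}$ (equivalently $T\subsetneq \sigma$), establishing (a); and when $T=\sigma$, so $T\setminus\{v\}=\sigma\setminus\{v\}$, the outer induction supplies $S^0$-homology, establishing (b) and completing the induction.
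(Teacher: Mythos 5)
Your proof is correct, but it takes a genuinely different route from the paper's. The paper cones off the boundary to embed $B$ in a homology $d$-sphere $S=B\cup(v\ast\partial B)$ and then applies Alexander duality, $\widetilde H_0(S[V\setminus\sigma])\cong\widetilde H_{d-1}(S[\sigma\cup\{v\}])$, computing the right-hand side as $\widetilde H_{d-1}(\overline\sigma,\partial\overline\sigma)\cong\FF$ via the long exact sequence of a pair and excision; once the duality is invoked this is a three-line argument. You instead run an induction on $d$ with the strengthened hypothesis that $B[V\setminus T]$ is acyclic for every proper $T\subsetneq\sigma$, peeling off one vertex of $\sigma$ at a time by Mayer--Vietoris and passing to the vertex link. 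The two points where your argument could go wrong are both handled correctly: the acyclicity of $B[V\setminus(T\setminus\{v\})]$ and of the star of $v$ does turn the Mayer--Vietoris sequence into the isomorphism $\widetilde H_i(B[V\setminus T])\cong\widetilde H_i(\lk_B(v)[V(\lk_B(v))\setminus(T\setminus\{v\})])$ (note the intersection is nonempty, so the reduced sequence applies), and your verification that $(\lk_B(v),\sigma\setminus\{v\})$ satisfies the hypotheses one dimension down is sound, modulo the standard fact--also used without proof in the paper's Lemma \ref{lemma:3.4}(iii)--that the link of a boundary vertex of a homology ball is again a homology ball. What the duality proof buys is brevity (and indeed the acknowledgements reveal the authors learned of it from Nevo, suggesting their original argument was closer in spirit to yours); what yours buys is an elementary, self-contained argument using only Mayer--Vietoris, with the small bonus that it also identifies the full reduced homology of $B[V\setminus\sigma]$ as that of $S^0$ and establishes acyclicity of all the intermediate deletions $B[V\setminus T]$.
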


\begin{proof}
Let $v \ast \partial B=\partial B \cup \{ \{v\} \cup \alpha : \alpha \in \partial B\}$ be the cone over $\partial
B$, where $v$ is a new vertex. It is easy to see that $S= B \cup (v \ast \partial B)$ is a homology $d$-sphere. Then
\begin{align}
\widetilde H_0(S[{V\setminus \sigma}]) &
\cong
\widetilde H_{d-1}(S[{\sigma \cup \{v\}}])
\cong
\widetilde H_{d-1}(S[{\sigma \cup \{v\}}], (v \ast \partial B)[{\sigma \cup \{v\}}])), \nonumber
\end{align}
where the first isomorphism follows from the Alexander duality \cite[Theorem 6.2.17]{Sp} and the second
isomorphism follows from the long exact sequence of pairs since $\widetilde H_i((v\ast \partial B)[{\sigma \cup
\{v\}}])=0$ for all $i$. Since $B[{V\setminus \sigma}]= S[{V\setminus \sigma}]$ and since
\begin{align}
\widetilde H_{d-1}(S[{\sigma \cup \{v\}}], (v \ast \partial B)[{\sigma \cup \{v\}}])) &
\cong
\widetilde H_{d-1}(B[\sigma], (\partial B)[\sigma]))
=
\widetilde H_{d-1}(\overline \sigma , \partial \overline \sigma) \cong \FF, \nonumber
\end{align}
$B[{V\setminus\sigma}]$ has exactly two connected components.
\end{proof}

Recall that any interior $(d-1)$-face $\sigma$ of a homology $d$-manifold $\Delta$ is contained in exactly two
facets since $\lk_\Delta(\sigma)$ has the same homologies as $S^0$.

\begin{lemma} \label{lemma:3.4}
Let $B$ and $\sigma$ be as in Lemma $\ref{lemma:3.3}$, $C_1$ and $C_2$ the connected components of $B[{V
\setminus \sigma}]$ and let $W_1$ and $W_2$ be the vertex sets of $C_1$ and $C_2$ respectively. Let $B_1=B[{W_1
\cup \sigma}]$ and $B_2=B[{W_2 \cup \sigma}]$. Then the following hold.
\begin{enumerate}[{\rm (i)}]
\item $B=B_1 \cup B_2$ and $B_1 \cap B_2= \overline \sigma$.

\item If $\{x\} \cup \sigma$ and $\{y\}\cup \sigma$ are the facets of $B$ containing $\sigma$, then one of $x$
and $y$ is in $B_1$ and the other is in $B_2$.
\item $B_1$ and $B_2$ are homology $d$-balls.
\end{enumerate}
\end{lemma}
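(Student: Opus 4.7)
For (i), the plan is to show $B \subseteq B_1 \cup B_2$ by case analysis on $\alpha \in B$: if $\alpha \subseteq \sigma$, then $\alpha \in \overline{\sigma} \subseteq B_1 \cap B_2$; otherwise any two vertices $u, w \in \alpha \setminus \sigma$ form an edge $\{u,w\} \subseteq \alpha$ in $B[V\setminus\sigma]$, so $u$ and $w$ lie in the same component $W_i$, whence $\alpha \in B_i$. The identity $B_1 \cap B_2 = B[(W_1\cup\sigma)\cap(W_2\cup\sigma)] = B[\sigma] = \overline{\sigma}$ is immediate from $W_1 \cap W_2 = \emptyset$.

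For (ii), I would induct on $d$. The base case $d = 1$ is direct since $B$ is a path with $\sigma$ an interior vertex whose two neighbors $x, y$ clearly lie on opposite sides. For $d \geq 2$, suppose for contradiction $x, y \in W_1$. For each $v \in \sigma$, set $L_v := \lk_B(v)$ and $\sigma_v := \sigma \setminus \{v\}$. Since $\{v\} \in \partial\overline{\sigma} \subseteq \partial B$, the complex $L_v$ is a homology $(d-1)$-ball; and $\sigma_v$ is an interior $(d-2)$-face of $L_v$ with $\partial\overline{\sigma_v} \subseteq \partial L_v = \lk_{\partial B}(v)$ (since any $\beta \subsetneq \sigma_v$ gives $\beta \cup \{v\} \subsetneq \sigma$, hence lies in $\partial\overline\sigma$). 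The facets of $L_v$ containing $\sigma_v$ are $\{x\}\cup\sigma_v$ and $\{y\}\cup\sigma_v$, so the inductive hypothesis for (ii) applied to $(L_v, \sigma_v)$ places $x$ and $y$ in different components of the induced subcomplex of $L_v$ on its vertex set minus $\sigma_v$. Every edge in that subcomplex injects into $B[V\setminus\sigma]$, so each such component is contained in some $W_j$; the assumption $x, y \in W_1$ forces both components into $W_1$, giving $V(L_v) \cap W_2 = \emptyset$. Since this holds for every $v \in \sigma$, and since no edge of $B$ connects $W_1$ to $W_2$, the set $W_2$ becomes a separate connected component of $B$, contradicting $\widetilde H_0(B) = 0$.

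For (iii), the Mayer--Vietoris sequence for $B = B_1 \cup B_2$ with intersection $\overline{\sigma}$ (all acyclic) forces $\widetilde H_*(B_1) = \widetilde H_*(B_2) = 0$. To verify that $B_i$ is a homology $d$-manifold with boundary, I would analyze vertex links. For $v \in W_i$, every neighbor of $v$ in $B$ must lie in $W_i \cup \sigma$ (else an edge in $B[V\setminus\sigma]$ would bridge $W_1$ and $W_2$), so $\lk_{B_i}(v) = \lk_B(v)$, a homology $(d-1)$-sphere or ball. For $v \in \sigma$, $\lk_{B_i}(v)$ is the induced subcomplex of $L_v$ on $(W_i \cap V(L_v)) \cup \sigma_v$; by (ii) just proved combined with the inductive (ii) for $(L_v, \sigma_v)$, the sets $W_1 \cap V(L_v)$ and $W_2 \cap V(L_v)$ are exactly the two components of $L_v$'s vertex set minus $\sigma_v$, so $\lk_{B_i}(v)$ equals one of the two homology $(d-1)$-balls produced by the inductive (iii) for $(L_v, \sigma_v)$. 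Orientability of $B_i$ is inherited from $B$ by decomposing the fundamental class of $(B,\partial B)$ as $\mu = \mu_1 + \mu_2$, where each $\mu_i$ is the restriction to the $d$-faces of $B_i$; part (ii) ensures $\partial\mu_i$ is supported on $\partial B_i$. Lemma \ref{lemma:2.1} then upgrades $B_i$ to a homology $d$-ball.

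The main obstacle is the inductive step for (ii), where the hypotheses of Lemma \ref{lemma:3.4} must be carefully verified for $(L_v, \sigma_v)$ inside $L_v$, and then the local separation extracted from the induction must be combined with the global connectedness of $B$ to force the contradiction. The link analysis in (iii) is structurally similar and becomes routine once (ii) is available.
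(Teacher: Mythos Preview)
Your argument is correct. Parts (i) and (iii) match the paper's proof closely; the paper handles (i) by checking facets only and (iii) by the same induction on $d$ with link analysis, Mayer--Vietoris, and Lemma~\ref{lemma:2.1}. Your explicit orientability check for $B_i$ (splitting the fundamental class as $\mu_1+\mu_2$) is a point the paper leaves implicit when invoking Lemma~\ref{lemma:2.1}, so your write-up is actually more complete there.

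The genuine difference is in (ii). The paper avoids induction entirely: since a homology $d$-ball is a pseudomanifold, any two facets are joined by a chain of facets sharing $(d-1)$-faces; taking one facet in $B_1$ and one in $B_2$, the chain must at some step pass from $B_1$ to $B_2$, and by (i) the shared $(d-1)$-face at that step can only be $\sigma$, which forces $\{x\}\cup\sigma$ and $\{y\}\cup\sigma$ to lie on opposite sides. Your route instead reduces to the inductive statement in each link $L_v$ and then globalizes via connectedness of $B$. Both are valid; the pseudomanifold argument is shorter and self-contained, while your inductive argument has the virtue of using only the link machinery already needed for (iii) and not appealing to strong connectedness as a separate fact.
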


\begin{proof}
(i) It is clear that $B \supseteq B_1 \cup B_2$ and $B_1 \cap B_2 =\overline \sigma$. We prove $B \subseteq B_1
\cup B_2$. Let $\alpha$ be a facet of $B$. Then $\alpha \setminus \sigma \in B[{V\setminus\sigma}]$ is contained
in either $W_1$ or $W_2$, which implies $\alpha \in B_1 \cup B_2$.

(ii) Since $C_1$ and $C_2$ are not empty, there are facets $\alpha,\gamma$ of $B$ such that $\alpha \in B_1$ and
$\gamma \in B_2$.
Since $B$ is a homology $d$-ball, it is a $d$-dimensional
pseudomanifold and hence there is a sequence $\alpha=\alpha_0,\alpha_1,\dots, \alpha_k =\gamma$ of facets such that
$\alpha_{i-1} \cap \alpha_i$ has dimension $d-1$ for $1\leq i\leq k$
(see \cite[p.\ 150 and 278]{Sp}).
Let $j$ be a
number such that $\alpha_{j-1} \in B_1$ and $\alpha_j \in B_2$. Then $\alpha_{j-1} \cap \alpha_{j}$ must be
$\sigma$. Since $\{x \} \cup \sigma$ and $\{y \} \cup \sigma$ are the only facets containing $\sigma$, they must
be $\alpha_{j-1}$ and $\alpha_j$.

(iii) We use induction on $d$. The statement is clear when $d=1$. Consider the subcomplex $B_1$. If $\alpha$ is a
face in $B_1\setminus\overline{\sigma}$ then any facet $\gamma\in B$ containing $\alpha$ must intersect $W_1$ and
hence is in $B_1$. If $\alpha\in \overline{\sigma}$, then $\alpha$ is a face of the $d$-face $\sigma\cup\{x\} \in
B_1$. Thus, $B_1$ is pure. Next, let $v$ be a vertex of $B_1$.  If $v \not \in \sigma$ then $\lk_{B_1}(v)=
\lk_B(v)$ is a homology $(d-1)$-sphere or a homology $(d-1)$-ball. Suppose $v \in \sigma$. Then $\lk_B(v)$ is a
homology $(d-1)$-ball such that $\sigma \setminus \{v\}$ is its interior face. Since $\lk_{\lk_B(v)}(\alpha)=
\lk_B(\{v\}\cup \alpha)$ is a homology ball for any $\alpha \in \partial(\overline{\sigma \setminus \{v\}})$, it
follows that $\partial(\overline{\sigma\setminus\{v\}}) \subseteq \partial(\lk_B(v))$. Since $x$ and $y$ are in
$\lk_B(v)$, $\lk_B(v)[{W_1}]$ and $\lk_B(v)[{W_2}]$ are non-empty. Thus, they are different components of
$\lk_B(v)[{V\setminus\sigma}]$. By the induction hypothesis, $\lk_{B_1}(v)= \lk_\Delta(v)[(W_1 \cup \sigma)\setminus \{v\}]$ is a homology $(d-1)$-ball. Thus, $\lk_{B_1}(v)$ is either a homology $(d-1)$-sphere or a homology $(d-1)$-ball for every vertex $v$ of $B_1$. This implies that $B_1$ is a homology $d$-manifold with boundary. Since part (i) and the exactness of the Mayer--Vietoris sequence imply $\widetilde{H}_i(B_1)=0$ for
all $i$, $B_1$ is a homology $d$-ball by Lemma \ref{lemma:2.1}. Similarly, $B_2$ is a homology $d$-ball.
\end{proof}

We say that $B_i$ in Lemma \ref{lemma:3.4} is the {\em $x$-component} (resp.\ {\em $y$-component}) of $B$ with
respect to $\sigma$ if it contains $x$ (resp.\ $y$).

Let $\Delta$ be a homology $d$-manifold with boundary. Suppose that $\Delta$ has an interior $(d-1)$-face
$\sigma=\{z_1,\dots,z_d\}$ with $\partial \overline \sigma \subseteq \partial \Delta$. Let $\{x \} \cup \sigma$
and $\{y\} \cup \sigma$ be the facets of $\Delta$ containing $\sigma$. Consider
\begin{align}
R & =\{\alpha \in \Delta: \alpha \cap \sigma \ne \emptyset,\ \alpha \not \subseteq \sigma\}. \nonumber
\end{align}
Observe that, for each $\tau \subsetneq \sigma$, $\lk_\Delta(\tau)$ is a homology ball satisfying the assumption
of Lemma \ref{lemma:3.3} in the sense that $\sigma \setminus \tau$ is an interior face of $\lk_{\Delta}(\tau)$
with $\partial(\overline{\sigma \setminus \tau}) \subseteq \partial (\lk_\Delta(\tau))$. Let
\begin{align}
R_x(k) & =\{ \alpha \in R: z_k \in \alpha, \alpha\setminus\{z_k\} \mbox{ is in the $x$-component of }
\lk_\Delta(z_k) \mbox{ w.r.t.\ } \sigma \setminus \{z_k\}\} \nonumber
\end{align}
and define $R_y(k)$ similarly. Let
\begin{align}
X & =\bigcup_{k=1}^d R_x(k) \mbox{ and } Y=\bigcup_{k=1}^d R_y(k). \nonumber
\end{align}
Note that $R=X \cup Y$.

\begin{lemma} \label{lemma:3.5}
If $R_x(k)$, $R_y(k)$, $X$ and $Y$ are as above then $X \cap Y=\emptyset$. Also, $\{\alpha \in X : z_k \in
\alpha\}=R_x(k)$ and $\{\alpha \in Y: z_k \in \alpha\}=R_y(k)$ for $1\leq k\leq d$.
\end{lemma}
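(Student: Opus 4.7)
My plan is to attach to each $\alpha \in R$ a single \emph{side} in $\{x,y\}$ determined by $\alpha$ alone, and show that $\alpha \in R_x(k)$ is equivalent to ``$z_k \in \alpha$ and the side of $\alpha$ is $x$'' (and analogously for $R_y(k)$). Both conclusions of the lemma drop out immediately: $X \cap Y = \emptyset$ because no $\alpha$ can have two sides, and $\{\alpha \in X: z_k \in \alpha\} = R_x(k)$ (together with its $Y$-analogue) becomes tautological.

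The side assignment will come from Lemma \ref{lemma:3.4} applied to $\lk_\Delta(\tau)$, where $\tau := \alpha \cap \sigma$. The boundary case $\tau = \sigma$ forces $\alpha$ to be one of the two facets $\{x\} \cup \sigma$, $\{y\} \cup \sigma$, each of which clearly lies in every $R_x(k)$ or every $R_y(k)$ respectively; assign the side accordingly. Assume now $\tau \subsetneq \sigma$. Since $\sigma$ is an interior face of $\Delta$ with $\partial \overline{\sigma} \subseteq \partial \Delta$, the face $\sigma \setminus \tau$ is an interior face of the homology $(d-|\tau|)$-ball $\lk_\Delta(\tau)$ with $\partial \overline{\sigma \setminus \tau} \subseteq \partial \lk_\Delta(\tau)$. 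Lemma \ref{lemma:3.4} then produces a decomposition $\lk_\Delta(\tau) = L_x(\tau) \cup L_y(\tau)$ with $L_x(\tau) \cap L_y(\tau) = \overline{\sigma \setminus \tau}$, $x \in L_x(\tau)$, and $y \in L_y(\tau)$. The face $\alpha \setminus \tau \in \lk_\Delta(\tau)$ is not contained in $\overline{\sigma \setminus \tau}$ (else $\alpha \subseteq \sigma$), so it lies in exactly one of $L_x(\tau), L_y(\tau)$; this determines the side of $\alpha$.

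It remains to link this side to $R_x(k), R_y(k)$ for each $z_k \in \tau$. Fix $z_k$ and let $B_x(k), B_y(k)$ be the decomposition of $\lk_\Delta(z_k)$ with respect to $\sigma \setminus \{z_k\}$ from Lemma \ref{lemma:3.4}. Taking links at $\tau \setminus \{z_k\} \subseteq \sigma \setminus \{z_k\} = B_x(k) \cap B_y(k)$, I obtain subcomplexes $C_x := \lk_{B_x(k)}(\tau \setminus \{z_k\})$ and $C_y := \lk_{B_y(k)}(\tau \setminus \{z_k\})$ of $\lk_\Delta(\tau)$ whose union is $\lk_\Delta(\tau)$, whose intersection is $\lk_{\overline{\sigma \setminus \{z_k\}}}(\tau \setminus \{z_k\}) = \overline{\sigma \setminus \tau}$, and which respectively contain $x$ and $y$ (since $\{x\} \cup (\sigma \setminus \{z_k\})$ is a facet of $B_x(k)$, so $\{x\} \cup (\sigma \setminus \tau) \in C_x$, and similarly for $y$). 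Using that $B_x(k)$ is the induced subcomplex of $\lk_\Delta(z_k)$ on $W_1(k) \cup (\sigma \setminus \{z_k\})$ (Lemma \ref{lemma:3.4}), one checks $C_x$ is an induced subcomplex of $\lk_\Delta(\tau)$. The key step is then to show that the vertices of $C_x$ outside $\sigma \setminus \tau$ form a union of connected components of $\lk_\Delta(\tau)[V \setminus (\sigma \setminus \tau)]$: any edge of $\lk_\Delta(\tau)$ with both endpoints outside $\sigma \setminus \tau$ is also an edge of $\lk_\Delta(z_k)$ with both endpoints outside $\sigma \setminus \{z_k\}$, and therefore cannot cross between $W_1(k)$ and $W_2(k)$. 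By Lemma \ref{lemma:3.3} the relevant induced subcomplex has exactly two components, so uniqueness forces $C_x = L_x(\tau)$ and $C_y = L_y(\tau)$. Since $\tau \setminus \{z_k\} \subseteq \alpha \setminus \{z_k\}$, we have $\alpha \setminus \{z_k\} \in B_x(k)$ iff $\alpha \setminus \tau \in C_x = L_x(\tau)$, which is exactly ``$\alpha \in R_x(k)$ iff the side of $\alpha$ is $x$'', with the analogous statement for $y$.

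The main obstacle is the identification $C_x = L_x(\tau)$: it requires showing that passing to the link of $\tau \setminus \{z_k\}$ inside the $x$-component of $\lk_\Delta(z_k)$ exactly recovers the $x$-component of the Lemma \ref{lemma:3.4} decomposition of $\lk_\Delta(\tau)$. The edge-tracing argument above, combined with Lemma \ref{lemma:3.3}'s uniqueness of the two-component structure on $\lk_\Delta(\tau)[V \setminus (\sigma \setminus \tau)]$, is what makes this identification go through.
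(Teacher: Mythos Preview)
Your proof is correct, and in fact it spells out carefully a point the paper's own argument leaves implicit. The approaches are closely related but organized differently.

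The paper argues pairwise: to show $X\cap Y=\emptyset$, it supposes $\alpha\in R_x(k)\cap R_y(\ell)$ and asserts directly that $\alpha\setminus\{z_k,z_\ell\}$ lands in both the $x$- and $y$-components of $\lk_\Delta(\{z_k,z_\ell\})$, forcing $\alpha\subseteq\sigma$; for the second claim it again passes through $\lk_\Delta(\{z_k,z_\ell\})$ and uses $\lk_\Delta(\{z_k,z_\ell\})\subseteq\lk_\Delta(z_k)$ to lift the component back. The passage from the decomposition of $\lk_\Delta(z_k)$ to that of $\lk_\Delta(\{z_k,z_\ell\})$ is treated as evident; it is essentially the inductive step already carried out inside the proof of Lemma~\ref{lemma:3.4}(iii).

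You instead attach a single side to each $\alpha$ via $\tau=\alpha\cap\sigma$ and prove once and for all that this side agrees with $R_x(k)$ for every $z_k\in\tau$. Your identification $C_x=L_x(\tau)$ is exactly the coherence step the paper suppresses, but established for arbitrary $\tau\subsetneq\sigma$ rather than just for pairs $\{z_k,z_\ell\}$. The edge-tracing argument combined with Lemma~\ref{lemma:3.3} is precisely the mechanism hidden in the proof of Lemma~\ref{lemma:3.4}(iii). Your organization is more conceptual (one side per face, both conclusions then tautological) and makes the argument self-contained, at the cost of a bit more setup; the paper's version is shorter but relies on the reader recognizing that the component decomposition is inherited by sublinks.
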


\begin{proof}
To prove the first result, we must prove that $R_x(k) \cap R_y(\ell) = \emptyset$ for all $k \ne \ell$.
Suppose to the contrary that $\alpha \in R_s(k) \cap R_y(\ell)$ for some $k \neq \ell$. Then $\alpha\setminus
\{z_k, z_{\ell}\}$ is in the $x$-component and the $y$-component of $\lk_\Delta(\{z_k,z_\ell\})$ with respect to
$\sigma \setminus \{z_k,z_\ell\}$ and hence $\alpha \subseteq \sigma$, a contradiction since $\alpha \in R$.

Let $\alpha \in X$ with $z_k \in \alpha$. Then $\alpha \in R_x(\ell)$ for some $\ell$. If $\ell=k$ then $\alpha
\in R_x(k)$. Otherwise, $\alpha\setminus \{z_k, z_{\ell}\}$ and $x$ are in the same component of
$\lk_{\Delta}(\{z_k, z_\ell\})$. Since $\lk_{\Delta}(z_k) \supseteq \lk_{\Delta}(\{z_k,z_\ell\})$, we have $\alpha
\in R_x(k)$. This proves that $\{\alpha \in X : z_k \in \alpha\}=R_x(k)$. Similarly, $\{\alpha \in Y : z_k \in
\alpha\}=R_y(k)$.
\end{proof}

\begin{definition} \label{def:handel.deletion}
Let $\Delta$ be a homology $d$-manifold with boundary and let $\sigma=\{z_1,\dots,z_d\}$ be an interior $(d-
1)$-face of $\Delta$ with $\partial \overline \sigma \subseteq \partial \Delta$. Let $R$, $R_x(k)$, $R_y(k)$, $X$
and $Y$ be as above. Let $z_1^+,\dots,z_d^+$ be new vertices and $\sigma^+=\{z_1^+,\dots,z_d^+\}$. For $\alpha
=\alpha^{\,\prime} \cup\{z_{i_1},\dots,z_{i_\ell}\} \in X$ with $\alpha^{\,\prime}\cap \sigma=\emptyset$, define
$\alpha^+=\alpha^{\,\prime} \cup\{z_{i_1}^+,\dots,z_{i_\ell}^+\}$. Consider the simplicial complex
\begin{align}
\widetilde \Delta^\sigma & = \{\alpha \in \Delta: \alpha \not \in X\} \cup \{\alpha^+ : \alpha \in X\} \cup
\overline {\sigma^+}. \nonumber
\end{align}
We say that $\widetilde \Delta^\sigma$ is obtained from $\Delta$ by a {\em simplicial handle deletion} over
$\sigma$.
\end{definition}

Intuitively, $\widetilde \Delta^\sigma$ is a simplicial complex obtained from $\Delta$ by cutting it
along the face $\sigma$. Note that this construction is a simplified version of the construction in
\cite[Lemma 3.3]{BDlow}. Also, a similar construction for manifolds without boundary was considered by Walkup \cite{Wa}. Simplicial handle deletion has the following property.

\begin{theorem} \label{theo:3.7}
Let $\widetilde \Delta^\sigma$ be obtained from a homology $d$-manifold with boundary $\Delta$ by a {\em
simplicial handle deletion} over $\sigma$. Then
\begin{enumerate}[{\rm (i)}]
\item $\widetilde \Delta^\sigma$ is a homology $d$-manifold with boundary, and \item $\Delta=( \widetilde
\Delta^\sigma)^{\overline \psi}$, where $\psi:\sigma^+ \to \sigma$ is the bijection given by $\psi(z_i^+)= z_i$
for all $i$.
\end{enumerate}
\end{theorem}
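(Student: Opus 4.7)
The plan is to prove (i) by computing every vertex link of $\widetilde\Delta^\sigma$ explicitly, and to deduce (ii) by tracking what $\psi_+$ does face-by-face. As a preliminary book-keeping, observe that in $\widetilde\Delta^\sigma$ the only $d$-face containing $\sigma$ is $\{y\}\cup\sigma$, inherited from $\Delta\setminus X$ (since $\{x\}\cup\sigma\in X$ is replaced by $\{x\}\cup\sigma^+$), and the only $d$-face containing $\sigma^+$ is $\{x\}\cup\sigma^+$. Hence, once (i) is in hand, both $\sigma$ and $\sigma^+$ will automatically be $(d-1)$-facets of $\partial(\widetilde\Delta^\sigma)$.

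For (i), I would split the vertices of $\widetilde\Delta^\sigma$ into four types. First, an outer vertex $v\notin\sigma\cup\sigma^+$ not adjacent to any $z_k$ has every face through it disjoint from $\sigma$, hence not in $R$ and therefore not in $X$, so $\lk_{\widetilde\Delta^\sigma}(v)=\lk_\Delta(v)$. Second, for an outer vertex $v$ adjacent to some $z_k$, one shows that $v$ lies in exactly one of the two components produced by applying Lemma \ref{lemma:3.4} to the homology $(d-1)$-ball $\lk_\Delta(z_k)$ and its interior face $\sigma\setminus\{z_k\}$, and that the resulting $x/y$ label does not depend on $k$. This forces every face of $\Delta$ through $v$ meeting $\sigma$ to lie in $X$ (resp.\ $Y$), and the map $\gamma\mapsto\gamma^+$ (resp.\ the identity) becomes a simplicial isomorphism $\lk_\Delta(v)\to\lk_{\widetilde\Delta^\sigma}(v)$. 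Third, for $v=z_i\in\sigma$, the faces of $\widetilde\Delta^\sigma$ through $z_i$ are precisely those in $R_y(i)\cup\overline\sigma$, and a direct matching with Lemma \ref{lemma:3.4} identifies $\lk_{\widetilde\Delta^\sigma}(z_i)$ with the $y$-component of $\lk_\Delta(z_i)$ with respect to $\sigma\setminus\{z_i\}$, a homology $(d-1)$-ball. Fourth, for $v=z_i^+$, the faces through $z_i^+$ are the $\alpha^+$ with $\alpha\in R_x(i)$ together with the subfaces of $\sigma^+$ through $z_i^+$, and an analogous computation identifies $\lk_{\widetilde\Delta^\sigma}(z_i^+)$ with a relabelled copy of the $x$-component. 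All four cases yield homology $(d-1)$-balls or spheres and at least one is a ball, so $\widetilde\Delta^\sigma$ is a homology $d$-manifold with boundary.

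For (ii), admissibility of $\psi$ reduces to $\lk_{\widetilde\Delta^\sigma}(z_i^+)\cap\lk_{\widetilde\Delta^\sigma}(z_i)=\{\emptyset\}$ for each $i$. By the third and fourth cases above, the vertex sets of these two links are contained in $W_2\cup(\sigma\setminus\{z_i\})$ and in the relabelled set $W_1\cup(\sigma\setminus\{z_i\})^+$ respectively; since $W_1\cap W_2=\emptyset$ by Lemma \ref{lemma:3.4} and no $z_j$ equals any $z_j^+$, these two vertex sets are disjoint. Applying $\psi_+$ face-wise then sends every $\alpha\in\Delta\setminus X$ to $\alpha$, every $\alpha^+$ back to $\alpha\in X$, and $\overline{\sigma^+}$ to $\overline\sigma$, so $(\widetilde\Delta^\sigma)^{\overline\psi}=(\Delta\setminus X)\cup X\cup\overline\sigma=\Delta$. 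The main obstacle I expect is the consistency substep inside the second case of (i): showing that the $x/y$ label of an outer vertex agrees across every $z_k$ to which it is adjacent. This should follow from the subface-closure of $X$ implicit in Lemma \ref{lemma:3.5}, together with a connectivity argument showing that any two non-$\sigma$ vertices joined by an edge in $\lk_\Delta(z_k)$ lie in the same component after deleting $\sigma\setminus\{z_k\}$.
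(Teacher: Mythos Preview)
Your plan is correct and follows the same overall strategy as the paper's proof: verify that every vertex link of $\widetilde\Delta^\sigma$ is a homology $(d-1)$-ball or sphere, with the links at $z_i$ and $z_i^+$ being the $y$- and $x$-components of $\lk_\Delta(z_i)$ respectively. Your treatment of cases three and four, and your argument for (ii), line up with the paper's.

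The one place where the paper is tidier is your second case. Rather than splitting outer vertices by adjacency to $\sigma$ and trying to assign each such $v$ a single global $x/y$ label, the paper handles all $v\notin\sigma\cup\sigma^+$ at once via the face-level map $\varphi:\Delta\to\widetilde\Delta^\sigma$ defined by $\varphi(\alpha)=\alpha^+$ if $\alpha\in X$ and $\varphi(\alpha)=\alpha$ otherwise. This $\varphi$ restricts to a bijection between $\{\alpha:v\in\alpha\in\Delta\}$ and $\{\alpha:v\in\alpha\in\widetilde\Delta^\sigma\}$, and what makes it a combinatorial isomorphism of links is not a global label for $v$ but merely that $\varphi$ preserves inclusion on these faces. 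That, in turn, needs only the \emph{face-local} consistency: if $\alpha\subseteq\beta$ both contain $v$ and both meet $\sigma$, then $\alpha\in X\Leftrightarrow\beta\in X$. This is immediate from Lemma~\ref{lemma:3.5} together with Lemma~\ref{lemma:3.4}(i): pick $z_k\in\alpha\cap\sigma$; then $\beta\in X$ forces $\beta\in R_x(k)$, so $\beta\setminus\{z_k\}$ lies in the $x$-component of $\lk_\Delta(z_k)$, and since $v\in\alpha\setminus\{z_k\}\subseteq\beta\setminus\{z_k\}$ with $v\notin\sigma$, the subface $\alpha\setminus\{z_k\}$ also lies in that $x$-component, giving $\alpha\in R_x(k)\subseteq X$.

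So the ``main obstacle'' you flag --- that the $x/y$ label of $v$ is independent of \emph{every} $k$ with $z_k$ adjacent to $v$ --- is stronger than what the argument actually requires, and is sidestepped entirely by the paper's face-map formulation. Your intuition that Lemma~\ref{lemma:3.5} is the key is right; you just do not need the extra connectivity argument you anticipate.
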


\begin{proof}
The second statement is straightforward if $\widetilde \Delta^\sigma$ is a homology manifold. So, we prove (i).
For simplicity, we write $\widetilde \Delta=\widetilde \Delta^\sigma$. Let $V$ be the vertex set of $\Delta$.

We prove that each vertex link of $\widetilde \Delta$ is either a homology $(d-1)$-sphere or a $(d-1)$-ball.
Suppose $v \not \in \sigma^+ \cup \sigma$. Define the map $\varphi: \Delta \to \widetilde \Delta$ by
$\varphi(\alpha)=\alpha$ if $\alpha \not \in X$ and $\varphi(\alpha)=\alpha^+$ if $\alpha \in X$. Then $\varphi$
gives a bijection between $\Delta \setminus\overline \sigma$ and $\widetilde \Delta \setminus (\overline
{\sigma^+} \cup \overline \sigma)$, in particular, it gives a bijection between $\{\alpha: v \in \alpha \in
\Delta\}$ and $\{\alpha : v \in \alpha \in \widetilde \Delta\}$. Thus $\lk_{\widetilde \Delta}(v)$ is
combinatorially isomorphic to $\lk_\Delta(v)$, which implies the desired property. Suppose $v =z_k^+$ for some
$k$. Then
\begin{align}
\lk_{\widetilde \Delta}(v) & =\lk_{\widetilde \Delta}(z_k^+)= \overline{(\sigma \setminus \{z_k\})^+} \cup \{
(\alpha \setminus \{z_k\})^+ : z_k \in \alpha \in X\}. \nonumber
\end{align}
On the other hand, the $x$-component of $\lk_{\Delta}(z_k)$ is
\begin{align}
\overline {\sigma \setminus \{z_k\}} & \cup \{ (\alpha \setminus \{z_k\}): z_k \in \alpha \in R_x(k)\}. \nonumber
\end{align}
By Lemma \ref{lemma:3.5}, they are combinatorially isomorphic. This proves that $\lk_{\widetilde \Delta}(v)$ is a
homology $(d-1)$-ball. Finally, suppose $v =z_k$ for some $k$. Since $X \cap Y = \emptyset$,
\begin{align*}
\lk_{\widetilde \Delta}(v)
&= \overline{\sigma \setminus \{v\}} \cup \{ \alpha \setminus \{v\}: v \in \alpha \in R_y(k) \setminus X\}\\
& = \overline{\sigma \setminus \{z_k\}} \cup \{ \alpha \setminus \{z_k\}: \alpha
\in R_y(k)\} \nonumber
\end{align*}
is the $y$-component of $\lk_\Delta(v)$ w.r.t.\ $\sigma\setminus\{v\}$. Thus $\lk_{\widetilde \Delta}(v)$
is a homology $(d-1)$-ball.

Finally, $\widetilde \Delta$ has a non-empty boundary since $\sigma \in \partial \widetilde \Delta$.
\end{proof}

\begin{remark}
In this section, we consider simplicial handle deletions for homology manifolds.
One may ask if the result holds also for combinatorial manifolds. However, we are not sure if a simplicial handle deletion preserves being combinatorial manifolds.
This is because, in Lemma \ref{lemma:3.4}, we are not sure if $B_1$ and $B_2$ are PL-balls when $B$ is a PL-ball.
\end{remark}

\section{A characterization of stacked manifolds}

In this section, we present a characterization of stacked manifolds.
We first define an analogue of Walkup's class for manifolds with boundary.

\begin{definition} \label{def:Walkup.boundary}
Let $d \geq 2$ be an integer. We recursively define $ \overline{\mathcal H^d}(k)$ as follows.
\begin{enumerate}[{\rm (a)}]
\item $\overline{\mathcal H^d}(0)$ is the set of stacked triangulations of $d$-balls.

\item $\Delta$ is a member of $\overline{\mathcal H^d}(k+1)$ if it is obtained from a member of
$\overline{\mathcal H^d}(k)$ by a simplicial handle addition.
\end{enumerate}
Let $\overline{\mathcal H^d}=\bigcup_{k \geq 0} \overline{\mathcal H^d}(k)$.
\end{definition}

Note that every stacked triangulation of the $d$-ball is obtained from a $d$-simplex by taking a simplicial
connected union with a $d$-simplex repeatedly. See \cite[Lemma 2.1]{DS}. The classes  $\overline{\mathcal H^d}$ and
$\mathcal H^d$ have the following simple relation.

\begin{lemma}\label{lemma:3.9}
For all integers $d \geq 3$ and $k \geq 0$, one has $\mathcal H^d(k)= \{ \partial \Delta: \Delta \in
\overline{\mathcal H^d}(k)\}$.
\end{lemma}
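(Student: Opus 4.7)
My plan is to proceed by induction on $k$. The base case $k=0$ is essentially tautological: stacked $(d-1)$-spheres are, by the paper's definition, exactly the boundaries of stacked $d$-balls, i.e.\ of the members of $\overline{\mathcal H^d}(0)$.

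For the inductive step I would treat the two inclusions separately. In the direction $\supseteq$, given $\Delta = B^{\overline \psi}$ with $B \in \overline{\mathcal H^d}(k)$ and $\psi$ admissible on $B$, the remark right after Lemma~\ref{lemma:3.2} gives $\partial \Delta = (\partial B)^\psi$. By the inductive hypothesis $\partial B \in \mathcal H^d(k)$, and since $\lk_{\partial B}(v) \subseteq \lk_B(v)$ for every $v \in \partial B$, admissibility of $\psi$ on $B$ automatically yields admissibility as a combinatorial handle addition on $\partial B$. Hence $\partial \Delta \in \mathcal H^d(k+1)$.

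The substantive direction is $\subseteq$. Given $S = T^\psi \in \mathcal H^d(k+1)$, the inductive hypothesis supplies $B \in \overline{\mathcal H^d}(k)$ with $T = \partial B$, and my candidate is $B^{\overline \psi}$. The main obstacle is that the admissibility used to build $S$ is only phrased on the closed manifold $T$, whereas forming $B^{\overline \psi}$ requires admissibility on $B$, whose links can contain faces not lying in $\partial B$. The resolution is to exploit that, by the base case and Lemma~\ref{lemma:3.2}(iii), every $B \in \overline{\mathcal H^d}(k)$ is itself a stacked $d$-manifold, so when $d \geq 3$ every face of $B$ of dimension at most $d-2$ lies in $\partial B$. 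Thus, if some nonempty $\alpha \in \lk_B(v) \cap \lk_B(\psi(v))$ existed, picking any $w \in \alpha$ would make $\{v,w\}$ and $\{\psi(v),w\}$ $1$-dimensional faces of $B$, hence boundary faces, so $\{w\} \in \lk_{\partial B}(v) \cap \lk_{\partial B}(\psi(v))$, contradicting admissibility of $\psi$ on $T$. With admissibility lifted to $B$, I obtain $B^{\overline \psi} \in \overline{\mathcal H^d}(k+1)$, and the remark after Lemma~\ref{lemma:3.2} delivers $\partial(B^{\overline \psi}) = (\partial B)^\psi = T^\psi = S$, closing the induction.
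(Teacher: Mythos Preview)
Your proof is correct and follows essentially the same approach as the paper's. The paper phrases the key lifting step more tersely---noting that by Lemma~\ref{lemma:3.2}(iii) the complex $B$ is stacked, so $B$ and $\partial B$ have the same $1$-faces, whence admissibility (a condition depending only on the $1$-skeleton) transfers automatically---but this is exactly the mechanism you spell out by choosing a vertex $w\in\alpha$.
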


\begin{proof}
The case when $k=0$ and the inclusion $\mathcal H^d(k) \supseteq \{ \partial \Delta: \Delta \in
\overline{\mathcal H^d}(k)\}$, for all $k\geq 0$, are obvious. For $k>0$, the converse inclusion follows by
induction on $k$. Indeed, if $\Gamma \in \mathcal H^{d}(k)$, then by induction we may assume that there is a
$\Delta \in \overline{\mathcal H^d}(k-1)$ such that $\Gamma= (\partial \Delta)^\psi$ for some admissible
bijection $\psi: \sigma \to \tau$ in $\partial \Delta$. Since, $\partial \Delta$ and $\Delta$ have the same
$1$-faces by Lemma \ref{lemma:3.2}(iii), the bijection $\psi$ is also admissible for $\Delta$, and $\Gamma=
\partial (\Delta^{\overline \psi}) \in \{ \partial \Delta: \Delta \in \overline{\mathcal H^d}(k)\}$.
\end{proof}

\begin{lemma} \label{lemma:3.10}
If $\Delta \in \overline{\mathcal H^d}(k)$ and $\Gamma \in \overline{\mathcal H^d}(\ell)$ then their simplicial
connected union belongs to $\overline{\mathcal H^d}(k+\ell)$.
\end{lemma}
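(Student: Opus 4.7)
The plan is to argue by induction on $k+\ell$. For the base case $k=\ell=0$, both $\Delta$ and $\Gamma$ are stacked $d$-balls, and the comment following Lemma~\ref{lemma:3.1} already identifies $\Delta\cup_\phi\Gamma$ as a homology $d$-ball. Its interior faces consist of the interior faces of $\Delta$, the interior faces of $\Gamma$, and the identified face $\mu=\nu$ itself, while all proper subfaces of $\mu$ remain boundary faces because their links (computed via Lemma~\ref{lemma:3.1}) are again homology balls. Since $\Delta$ and $\Gamma$ are stacked and $\dim\mu=d-1$, every interior face has dimension at least $d-1$, so $\Delta\cup_\phi\Gamma\in\overline{\mathcal H^d}(0)$.

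For the inductive step I may assume $\ell\geq 1$ and write $\Gamma=(\Gamma')^{\overline{\theta}}$ with $\Gamma'\in\overline{\mathcal H^d}(\ell-1)$ and $\theta:\sigma\to\tau$ an admissible bijection between facets of $\partial\Gamma'$. Let $\phi:\mu\to\nu$ be the admissible bijection producing the given $\Delta\cup_\phi\Gamma$. Admissibility of $\theta$ in $\Gamma'$ implies that the facet $\nu$ of $\partial\Gamma=(\partial\Gamma')^{\theta}$ lifts to a unique facet $\nu'\in\partial\Gamma'\setminus\{\sigma,\tau\}$ with $\theta_+(\nu')=\nu$, and pulling $\phi$ back along $\theta_+$ gives a bijection $\phi':\mu\to\nu'$. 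Because the vertex sets $V(\Delta)$ and $V(\Gamma')$ are disjoint, $\phi'$ is automatically admissible between the two components, so by the induction hypothesis $\Delta\cup_{\phi'}\Gamma'\in\overline{\mathcal H^d}(k+\ell-1)$. The goal is then to realize $\Delta\cup_\phi\Gamma$ as the simplicial handle addition $(\Delta\cup_{\phi'}\Gamma')^{\overline{\theta}}$, which by Definition~\ref{def:Walkup.boundary} would place it in $\overline{\mathcal H^d}(k+\ell)$.

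The main obstacle, and where I expect the real work to lie, is verifying that $\theta$ remains admissible inside the connected complex $\Delta\cup_{\phi'}\Gamma'$. The crucial input is that $\nu=\theta_+(\nu')$ has $d$ distinct vertices, which forces $\theta_+$ to be injective on $\nu'$; equivalently, $\nu'$ cannot contain both $v$ and $\theta(v)$ for any $v\in\sigma$. When neither $v$ nor $\theta(v)$ lies in $\nu'$, their links in $\Delta\cup_{\phi'}\Gamma'$ agree with those in $\Gamma'$ and admissibility transfers directly. The delicate case is (by symmetry) $v\in\nu'$ and $\theta(v)\notin\nu'$: any non-empty simplex $\alpha$ in $\lk_{\Delta\cup_{\phi'}\Gamma'}(v)\cap\lk_{\Delta\cup_{\phi'}\Gamma'}(\theta(v))$ contributed by the $\Delta$-side of the link of the identified vertex must be supported on $\nu'\setminus\{v\}$, and then for any $w\in\alpha$ one would have $\{v,w\}\in\Gamma'$ (as a subset of $\nu'$) and $\{w,\theta(v)\}\in\Gamma'$ (as a subset of $\alpha\cup\{\theta(v)\}$), producing $\{w\}\in\lk_{\Gamma'}(v)\cap\lk_{\Gamma'}(\theta(v))$ and contradicting admissibility of $\theta$ in $\Gamma'$. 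Once admissibility is established, a routine bookkeeping check that identifying $\sigma$ with $\tau$ commutes with identifying $\mu$ with $\nu'$ yields $(\Delta\cup_{\phi'}\Gamma')^{\overline{\theta}}=\Delta\cup_\phi\Gamma\in\overline{\mathcal H^d}(k+\ell)$, completing the induction.
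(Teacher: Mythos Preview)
Your argument is correct and follows essentially the same route as the paper: induct on $k+\ell$, and in the inductive step write $\Gamma=(\Gamma')^{\overline\theta}$, lift $\phi$ to a bijection into $\partial\Gamma'$, and identify $\Delta\cup_\phi\Gamma$ with $(\Delta\cup_{\phi'}\Gamma')^{\overline\theta}$. The paper compresses all of this into the phrase ``by an appropriate identification of the vertices'' and cites Lemma~\ref{lemma:3.2}(iii) for the base case, whereas you carefully verify the existence of the lift $\nu'$ and the admissibility of $\theta$ in the connected union---details the paper leaves implicit.
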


\begin{proof}
We may assume $k\leq \ell$. We use induction on $k+\ell$. If $k +\ell=0$ then the assertion follows from Lemma
\ref{lemma:3.2}(iii). Suppose $k+\ell >0$. Then $\Gamma=\Sigma^{\overline \varphi}$ for some $\Sigma \in
\overline{\mathcal H^d}(\ell-1)$ and for some admissible bijection $\varphi$ between facets of $\partial \Sigma$.
Let $\psi$ be a bijection from a facet of $\partial \Delta$ to a facet of $\partial \Gamma$. Then $\Delta
\cup_\psi \Gamma$ is $(\Delta \cup_\psi \Sigma)^\varphi$ (by an appropriate identification of the vertices). By
induction hypothesis, we have $\Delta \cup_\psi \Sigma \in \overline{\mathcal H^d}(k+\ell-1)$ and hence $\Delta
\cup_\psi \Gamma \in \overline{\mathcal H^d}(k+\ell)$.
\end{proof}

\begin{remark}
A similar result for $\mathcal H^d$ was proved by Walkup \cite[Proposition 4.4]{Wa}.
\end{remark}

\begin{theorem} \label{theo:main.3.12}
For $d\geq 2$, let $\Delta$ be a connected homology $d$-manifold with boundary. Then $\Delta$ is
stacked if and only if $\Delta \in \overline{\mathcal H^d}$.
\end{theorem}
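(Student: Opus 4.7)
The plan is to prove both directions by induction, using the simplicial handle deletions developed in Section 3 to reduce the complexity of $\Delta$ in the nontrivial direction.

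\textbf{Easy direction.} Induct on the smallest $k$ with $\Delta \in \overline{\mathcal H^d}(k)$. When $k = 0$, $\Delta$ is a stacked triangulation of the $d$-ball, which is by definition a stacked manifold with boundary. For $k \geq 1$, write $\Delta = \Gamma^{\overline \psi}$ for some $\Gamma \in \overline{\mathcal H^d}(k-1)$ and some admissible $\psi$; the inductive hypothesis makes $\Gamma$ stacked, Lemma \ref{lemma:3.2}(i) ensures $\Delta$ remains a homology $d$-manifold with boundary, and Lemma \ref{lemma:3.2}(iii) carries stackedness through to $\Delta$.

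\textbf{Hard direction.} Assume $\Delta$ is a connected stacked homology $d$-manifold with boundary, and induct on $N(\Delta) := f_{d-1}^{\text{int}}(\Delta)$, the number of interior $(d-1)$-faces. If $N(\Delta) = 0$, then no two facets share a $(d-1)$-face; combined with the fact that any two facets of a connected $d$-dimensional pseudomanifold are joined by a chain of $(d-1)$-face-adjacent facets, this forces $\Delta$ to have a single facet, so $\Delta$ is a $d$-simplex and lies in $\overline{\mathcal H^d}(0)$. For the inductive step, choose any interior $(d-1)$-face $\sigma = \{z_1, \dots, z_d\}$; stackedness guarantees that every face of dimension $\leq d-2$ lies on $\partial \Delta$, so $\partial \overline \sigma \subseteq \partial \Delta$ and the simplicial handle deletion of Definition \ref{def:handel.deletion} applies. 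Let $\widetilde \Delta^\sigma$ be the result. By Theorem \ref{theo:3.7}, $\widetilde \Delta^\sigma$ is a homology $d$-manifold with boundary, it is stacked by Lemma \ref{lemma:3.2}(iii), and $\Delta = (\widetilde \Delta^\sigma)^{\overline \psi}$ for the bijection $\psi(z_i^+) = z_i$. Direct inspection of the construction shows that $\sigma$ becomes a boundary facet in $\widetilde \Delta^\sigma$, its copy $\sigma^+$ is likewise a boundary facet, and no other $(d-1)$-face changes status, so $N(\widetilde \Delta^\sigma) = N(\Delta) - 1$. Now split into cases: if $\widetilde \Delta^\sigma$ is connected, the inductive hypothesis and Definition \ref{def:Walkup.boundary}(b) give $\Delta = (\widetilde \Delta^\sigma)^{\overline \psi} \in \overline{\mathcal H^d}$; if instead $\widetilde \Delta^\sigma = \Delta_1 \sqcup \Delta_2$, each $\Delta_i$ is a connected stacked homology $d$-manifold with boundary with $N(\Delta_i) < N(\Delta)$, so induction yields $\Delta_i \in \overline{\mathcal H^d}$, and Lemma \ref{lemma:3.10} concludes $\Delta = \Delta_1 \cup_\psi \Delta_2 \in \overline{\mathcal H^d}$.

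The main technical obstacle is verifying that the bijection $\psi : \sigma^+ \to \sigma$ in the identity $\Delta = (\widetilde \Delta^\sigma)^{\overline \psi}$ is actually admissible for $\widetilde \Delta^\sigma$, so that the recursive use of Definition \ref{def:Walkup.boundary}(b) is legitimate. This reduces to checking that $\lk_{\widetilde \Delta^\sigma}(z_i)$ and $\lk_{\widetilde \Delta^\sigma}(z_i^+)$ have disjoint vertex sets, which follows from the descriptions of these links inside the proof of Theorem \ref{theo:3.7} together with the disjointness $R_x(i) \cap R_y(j) = \emptyset$ established in Lemma \ref{lemma:3.5}.
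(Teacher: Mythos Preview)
Your proof is correct and follows essentially the same route as the paper's: both directions are handled as in the paper, with the ``if'' direction coming from Lemma~\ref{lemma:3.2}(iii) and the ``only if'' direction proved by induction on the number of interior $(d-1)$-faces via the simplicial handle deletion of Theorem~\ref{theo:3.7}, splitting into the connected and disconnected cases and invoking Lemma~\ref{lemma:3.10} for the latter. Your write-up is simply more detailed---in particular you spell out the base case, the count $N(\widetilde\Delta^\sigma)=N(\Delta)-1$, and the admissibility of $\psi:\sigma^+\to\sigma$---all of which the paper leaves implicit in Theorem~\ref{theo:3.7}(ii) and Lemma~\ref{lemma:3.2}.
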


\begin{proof}
The `only if part'  is obvious if $\Delta$ has one facet. Suppose that $\Delta$ has more than one facet. Then
$\Delta$ has an interior $(d-1)$-face $\sigma$. Since $\Delta$ is stacked, it has no interior faces of dimension
$\leq d-2$. Thus we have $\partial \overline \sigma \subseteq \partial \Delta$. By Lemma \ref{lemma:3.2} and
Theorem \ref{theo:3.7}, $\Delta$ is a simplicial connected union of two connected stacked manifolds or
is obtained from a connected stacked manifold having a smaller first Betti number by a simplicial handle
addition. Then the assertion follows by induction on the number of interior $(d-1)$-faces.
\end{proof}

By Lemma \ref{lemma:3.9} and Theorem \ref{theo:main.3.12} we obtain the following.

\begin{corollary} \label{cor:main.3.13}
Let $\Delta$ be a connected, closed, homology manifold of dimension $d \geq 2$. Then $\Delta$ is stacked if and
only if $\Delta \in \mathcal H^{d+1}$.
\end{corollary}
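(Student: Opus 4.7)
The plan is to deduce the corollary by chaining Theorem \ref{theo:main.3.12} (which characterizes connected stacked homology manifolds with boundary as the class $\overline{\mathcal{H}^{d+1}}$) with Lemma \ref{lemma:3.9} (which identifies $\mathcal{H}^{d+1}(k)$ with $\{\partial B : B \in \overline{\mathcal{H}^{d+1}}(k)\}$, valid for $d+1 \geq 3$, i.e., for $d \geq 2$). The hypotheses of the corollary match precisely the dimensional ranges of these two results.

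For the ``only if'' direction, I would start from a stacked connected closed homology $d$-manifold $\Delta$. By definition there exists a stacked triangulation $B$ of a $(d+1)$-manifold with $\partial B = \Delta$. Before invoking Theorem \ref{theo:main.3.12}, one must reduce to the case where $B$ is connected: if $B_1,\dots,B_r$ are the connected components of $B$ then $\partial B = \partial B_1 \sqcup \cdots \sqcup \partial B_r$, and since $\Delta$ is connected and nonempty, exactly one component (say $B_1$) has nonempty boundary while the remaining ones are closed stacked $(d+1)$-manifolds that contribute nothing to $\partial B$ and can be discarded. Applying Theorem \ref{theo:main.3.12} to the connected stacked $(d+1)$-manifold $B_1$ then places it in $\overline{\mathcal{H}^{d+1}}(k)$ for some $k$, and Lemma \ref{lemma:3.9} yields $\Delta = \partial B_1 \in \mathcal{H}^{d+1}(k) \subseteq \mathcal{H}^{d+1}$.

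For the ``if'' direction, I would take $\Delta \in \mathcal{H}^{d+1}(k)$ and use Lemma \ref{lemma:3.9} to produce some $B \in \overline{\mathcal{H}^{d+1}}(k)$ with $\partial B = \Delta$. Theorem \ref{theo:main.3.12} then says that $B$ is a connected stacked homology $(d+1)$-manifold with boundary, and therefore $\Delta = \partial B$ is stacked by the very definition of stackedness for closed manifolds. Connectedness of $\Delta$ is inherited from the fact that every element of $\mathcal{H}^{d+1}$ is connected, which is built into the definition via $\mathcal{H}^{d+1}(0)$ (stacked spheres) and the hypothesis of combinatorial handle addition.

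The only real subtlety, and the step that deserves explicit mention, is the connectedness reduction in the ``only if'' direction: Theorem \ref{theo:main.3.12} demands a connected input, whereas the definition of ``stacked'' for a closed manifold does not a priori require the $(d+1)$-manifold $B$ to be connected. Once one observes that closed components of $B$ do not contribute to $\partial B$ and may be deleted without altering stackedness, both implications become one-line invocations of the two previous results.
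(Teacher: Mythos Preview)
Your proposal is correct and follows exactly the paper's route: the paper derives the corollary directly from Lemma~\ref{lemma:3.9} and Theorem~\ref{theo:main.3.12} with no further argument. Your connectedness reduction for $B$ is in fact vacuous---a stacked $(d+1)$-manifold with nonempty boundary can have no closed component (any vertex of such a component would be an interior $0$-face, contradicting stackedness for $d \geq 1$), so the filling $B$ is automatically connected.
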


Finally, we discuss a connection between Corollary \ref{cor:main.3.13} and a question posed by Novik and Swartz \cite{NS}. Novik and Swartz \cite[Theorem 5.2]{NS} gave the
following interesting characterization of members of Walkup's class $\mathcal{H}^{d+1}$ for $d\geq 4$.

\begin{proposition}[Novik--Swartz] \label{prop:NS2009}
Let $\Delta$ be a connected, closed, orientable, homology manifold of dimension $d \geq 3$. Then
\begin{align}
f_1(\Delta) -(d+1) f_0(\Delta) + \binom{d+2}{2} \geq \binom{d+2}{2} \beta_1(\Delta). \nonumber
\end{align}
Further, if $d\geq 4$ then $f_1(\Delta) -(d+1) f_0(\Delta) + \binom{d+2}{2} = \binom{d+2}{2} \beta_1(\Delta)$
if and only if $\Delta\in \mathcal{H}^{d+1}$.
\end{proposition}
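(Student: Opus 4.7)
The plan is to adapt the Stanley-Reisner/rigidity strategy of Novik-Swartz. A routine $h$-vector manipulation rewrites the left side as $g_2(\Delta):=h_2(\Delta)-h_1(\Delta)$, so the inequality becomes $g_2(\Delta)\geq \binom{d+2}{2}\beta_1(\Delta)$. The natural arena is the Stanley-Reisner ring $\FF[\Delta]$ together with a generic Artinian reduction $A=\FF[\Delta]/(\theta_1,\dots,\theta_{d+1})$, which is Buchsbaum because $\Delta$ is a homology manifold.

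For the inequality, I would first invoke the socle decomposition of $A$ available for orientable closed homology manifolds: in each degree $i\leq d$, $A$ contains a distinguished socle subspace of dimension $\binom{d+1}{i}\beta_{i-1}(\Delta)$, coming from Schenzel's formula together with Gr\"abe's identification of the canonical module. I would then apply the algebraic form of Kalai's rigidity: multiplication by a generic linear form $\omega:A_1\to A_2$ has kernel at most the degree-$1$ socle. Comparing $\dim A_1=f_0-(d+1)$ and $\dim A_2$ against the $h$-vector and the socle contribution in degree $2$ rearranges to $g_2(\Delta)\geq \binom{d+2}{2}\beta_1(\Delta)$.

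For the equality case when $d\geq 4$, the forward direction is the delicate step. Equality forces the generic multiplication map to be as surjective as possible, and by a localization-at-a-vertex argument this propagates to every vertex link, giving $g_2(\lk_\Delta(v))=0$ for each vertex $v$. Since $\lk_\Delta(v)$ is a homology $(d-1)$-sphere with $d-1\geq 3$, Kalai's lower bound theorem (the $g_2=0$ characterization of stacked spheres) identifies it as a stacked sphere, so $\Delta$ is locally stacked; Kalai's Corollary 8.4, already cited in the paper, then places $\Delta$ in $\mathcal{H}^{d+1}$. Conversely, a short induction on the construction of $\mathcal{H}^{d+1}(k)$ suffices: stacked spheres have $g_2=\beta_1=0$, and a direct Mayer-Vietoris plus face-count check shows that each combinatorial handle addition increases $\beta_1$ by exactly $1$ and $g_2$ by exactly $\binom{d+2}{2}$, preserving equality.

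The main obstacle is the forward direction of the equality case: propagating tightness in the global rigidity count down to every vertex link requires a sufficiently sharp version of the socle/rigidity analysis, so that no slack is lost when restricting from $\Delta$ to $\lk_\Delta(v)$. The restriction $d\geq 4$ enters precisely here, because the $g_2=0$ characterization of homology spheres as stacked needs the link dimension to be at least $3$.
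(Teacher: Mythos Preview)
The paper does not give its own proof of this proposition: it is stated as a result of Novik and Swartz and attributed to \cite[Theorem~5.2]{NS}, with no proof environment following it. So there is no ``paper's proof'' to compare your proposal against.

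That said, your sketch is essentially the strategy of the original Novik--Swartz paper: pass to the Artinian reduction of the Stanley--Reisner ring, use Schenzel's formula and the socle description for orientable Buchsbaum complexes to obtain the inequality $g_2(\Delta)\geq\binom{d+2}{2}\beta_1(\Delta)$, and then, in the equality case for $d\geq 4$, localize at vertices to force $g_2(\lk_\Delta(v))=0$, apply Kalai's characterization of stacked spheres, and finish with \cite[Corollary~8.4]{Ka}. Your converse direction (induction over $\mathcal H^{d+1}(k)$, tracking the change in $g_2$ and $\beta_1$ under combinatorial handle addition) is the standard one. As an outline of the Novik--Swartz argument this is accurate; just be aware that in the present paper this proposition is treated as a black box rather than reproved.
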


It was asked by Novik and Swartz \cite[Problem 5.3]{NS} if the equality case of the last statement in Proposition
\ref{prop:NS2009} also holds in dimension $3$.
Corollary \ref{cor:main.3.13}  and the next result of Bagchi \cite[Theorem 1.14]{Bag}
answer this question.

\begin{proposition}[Bagchi] \label{prop:Ba2016}
Let $\Delta$ be a connected, closed, homology $3$-manifold.
Then $f_1(\Delta) - 4f_0(\Delta) + 10 = 10 \beta_1(\Delta)$ if and only if $\Delta$ is stacked.
\end{proposition}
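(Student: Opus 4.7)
The plan is to prove the two directions separately, leveraging Corollary~\ref{cor:main.3.13} as the main structural tool.

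For the ``only if'' direction, I would use Corollary~\ref{cor:main.3.13} to represent a connected stacked closed homology $3$-manifold $\Delta$ as a member of $\mathcal{H}^4$, and then induct on the number of combinatorial handle additions needed to build it from a stacked triangulation of $S^3$. The base case is a stacked $3$-sphere, for which a standard $f$-vector computation, using the Dehn--Sommerville relations and induction on the number of facets, gives $f_1 = 4f_0 - 10$; hence $g_2(\Delta) := f_1 - 4f_0 + 10 = 0 = 10\,\beta_1(\Delta)$. A single combinatorial handle addition identifies two disjoint $3$-facets along an admissible bijection: it removes $4$ vertices and $6$ edges, so $g_2$ increases by $10$, and by the closed-manifold analogue of Lemma~\ref{lemma:3.2}(ii), $\beta_1$ increases by $1$. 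Therefore $g_2 - 10\beta_1$ is invariant under each step and stays $0$.

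For the ``if'' direction, I would induct on $\beta_1(\Delta)$. In the base case $\beta_1 = 0$ the hypothesis reduces to $f_1 = 4f_0 - 10$, which is the classical lower bound theorem equality case in dimension $3$ due to Walkup and Barnette, and which forces $\Delta$ to be a stacked triangulation of $S^3$. For the inductive step with $\beta_1(\Delta) > 0$, the target is to locate an admissible bijection $\psi\colon \sigma \to \tau$ between two facets of $\Delta$ so that reversing a combinatorial handle addition produces a connected closed homology $3$-manifold $\Delta'$ with $\beta_1(\Delta') = \beta_1(\Delta) - 1$ and $g_2(\Delta') = g_2(\Delta) - 10$. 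By the inductive hypothesis $\Delta'$ is stacked, hence $\Delta' \in \mathcal{H}^4$ by Corollary~\ref{cor:main.3.13}; adding the handle back places $\Delta \in \mathcal{H}^4$, and Corollary~\ref{cor:main.3.13} again yields that $\Delta$ is stacked.

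The principal obstacle is precisely this inductive step: producing the pair $(\sigma,\tau)$ that supports a genuine handle. Local stackedness of vertex links, which follows from equality in the Novik--Swartz bound of Proposition~\ref{prop:NS2009} for $d = 3$, is not by itself sufficient; this is exactly why Kalai's argument fails in dimension $3$, as illustrated by the cited example in \cite{BDS2}. Bagchi's approach bypasses this difficulty by introducing combinatorial $\mu$- and $\sigma$-vectors built from Betti numbers of induced subcomplexes, and by establishing for connected closed homology $3$-manifolds a refined chain of inequalities of the form
\[
10\,\beta_1(\Delta) \;\leq\; \mu_1(\Delta) \;\leq\; g_2(\Delta).
\]
Equality throughout then provides enough rigidity in how the first homology distributes over induced subcomplexes to exhibit a pair of facets supporting a combinatorial handle, so that the inductive reduction can be carried out.
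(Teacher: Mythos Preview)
First, note that the paper does not supply its own proof of this proposition: it is quoted verbatim as Bagchi's result \cite[Theorem~1.14]{Bag} and used as a black box. So there is no in-paper argument to compare against; what matters is whether your sketch could stand on its own.

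Your direction ``stacked $\Rightarrow$ equality'' (which you label ``only if''---the labels are swapped) is fine: Corollary~\ref{cor:main.3.13} puts $\Delta$ in $\mathcal H^4$, the base case is a stacked $3$-sphere with $g_2=0=\beta_1$, and a combinatorial handle addition changes $(f_0,f_1,\beta_1)$ by $(-4,-6,+1)$, preserving $g_2-10\beta_1$.

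The other direction has a genuine gap. You set up an induction on $\beta_1$ whose inductive step requires producing a pair of facets supporting a handle deletion, you correctly identify this as the crux, and then you defer to Bagchi---but your description of his argument is not what he does. Bagchi does \emph{not} locate a handle and run your induction. His proof stays inside the $\mu$/$\sigma$-vector framework: he proves a chain of inequalities (schematically $\beta_1 \le \mu_1$ and $\binom{d+2}{2}\mu_1 \le g_2$; your displayed chain drops a factor of $10$), and analyzes the equality cases directly to force each vertex link to be a stacked $2$-sphere and, further, to force the global stacked structure. The passage from locally stacked to stacked in dimension $3$ is handled by his $\sigma$-vector equalities, not by exhibiting a handle. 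Relatedly, your claim that ``local stackedness of vertex links follows from equality in the Novik--Swartz bound for $d=3$'' is not something available from Proposition~\ref{prop:NS2009} or anything else in this paper: vertex links of a $3$-manifold are $2$-spheres, which all satisfy $g_2=0$, so the rigidity argument that works for $d\ge 4$ gives no information here. Obtaining local stackedness in dimension $3$ already requires Bagchi's machinery.

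In short: one direction is proved, the other is not---you have restated the difficulty and pointed to \cite{Bag}, but the mechanism you attribute to Bagchi (handle-finding to feed an induction) is not the mechanism he uses.
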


\begin{corollary} \label{cor:NS2009.problem}
Let $\Delta$ be a connected, closed, homology $3$-manifold. The following conditions are equivalent.
\begin{enumerate}[{\rm (i)}]
\item $\Delta$ is a member of $\mathcal{H}^{4}$.

\item $\Delta$ is stacked.

\item $f_1(\Delta) -4 f_0(\Delta) + 10 = 10 \beta_1(\Delta)$.
\end{enumerate}
\end{corollary}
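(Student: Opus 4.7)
The plan is to derive this three-way equivalence by chaining together the two results stated immediately above the corollary, so very little new work is required. The key observation is that Corollary~\ref{cor:main.3.13} is proved for every connected closed homology $d$-manifold with $d\geq 2$, and in particular for $d=3$ it says precisely that $\Delta\in\mathcal{H}^4$ if and only if $\Delta$ is stacked. This yields the equivalence (i)$\Leftrightarrow$(ii) with no extra argument.

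For the equivalence (ii)$\Leftrightarrow$(iii), the plan is simply to invoke Proposition~\ref{prop:Ba2016} (Bagchi), whose statement is exactly that a connected closed homology $3$-manifold $\Delta$ satisfies $f_1(\Delta) - 4f_0(\Delta) + 10 = 10\beta_1(\Delta)$ if and only if $\Delta$ is stacked. Since the hypotheses on $\Delta$ in our corollary (connected, closed, homology $3$-manifold) match those of both cited results on the nose, concatenating the two biconditionals gives (i)$\Leftrightarrow$(ii)$\Leftrightarrow$(iii).

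There is no genuine obstacle here; the corollary is a packaging statement whose proof amounts to pointing at the two ingredients it bundles. The only subtlety worth highlighting in the write-up is why the statement is genuinely new, not a consequence of Proposition~\ref{prop:NS2009} alone: Novik and Swartz prove the equality characterization only for $d\geq 4$, so in dimension~$3$ one must first establish Corollary~\ref{cor:main.3.13} (which replaces Kalai's higher-dimensional input, unavailable in dimension~$3$) and then combine it with Bagchi's recent result to close the gap. Thus the two-line proof we are proposing really does resolve Problem~5.3 of Novik and Swartz in the remaining open dimension.
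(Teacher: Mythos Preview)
Your proposal is correct and matches the paper's approach exactly: the paper does not even include a formal proof environment for this corollary, stating in the surrounding text that it follows by combining Corollary~\ref{cor:main.3.13} (giving (i)$\Leftrightarrow$(ii)) with Bagchi's Proposition~\ref{prop:Ba2016} (giving (ii)$\Leftrightarrow$(iii)). Your commentary on why Proposition~\ref{prop:NS2009} alone does not suffice in dimension~$3$ is accurate and consonant with the paper's discussion.
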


\begin{remark}
It is known that the topological type of a member of $\mathcal H^{d+1}$ is one of the following: (i) the
$d$-sphere $S^d$ (ii) connected sums of sphere product $S^{d-1} \times S^1$ (iii) connected sums of twisted
sphere product $S^{d-1} \mbox{$\times \hspace{-8.2pt} _-$} S^1$. See \cite[Section 3]{LSS}. Thus Corollary
\ref{cor:main.3.13} also gives a new restriction on the topological types of stacked manifolds in
dimensions 2 and 3.
\end{remark}

\begin{remark}
\label{nonorientable}
Recently, Proposition \ref{prop:NS2009} was extended to non-orientable homology manifolds (and even to normal pseudomanifolds) by the second author. See \cite[Theorem 5.3]{Mu}.
\end{remark}

\section{Tight triangulations and stackedness}

In this section, we study stackedness of tight triangulations.
For a simplicial complex $\Delta$ with vertex set $V$, a subset $\sigma \subseteq V$ of $k+1$
elements is called a \emph{missing $k$-face} of $\Delta$ if $\sigma \notin \Delta$ and all proper subsets of
$\sigma$ are faces of $\Delta$. If $\sigma$ is a missing $k$-face of $\Delta$, then we have $\widetilde
H_{k-1}(\Delta[\sigma]) \cong \FF$. The following lemma follows from the definition of tightness.

\begin{lemma} \label{lemma:4.1}
Let $\Delta$ be a tight simplicial complex on the vertex set $V$. Then
\begin{enumerate}[{\rm (i)}]
\item for all subsets $U \subseteq W$ of $V$, the natural map $\widetilde H_i(\Delta[U]) \to \widetilde
H_i(\Delta[W])$ induced by the inclusion is injective, and

\item if $\beta_{k-1}(\Delta)=0$ then $\Delta$ has no missing $k$-faces.
\end{enumerate}
\end{lemma}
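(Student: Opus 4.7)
The plan is to derive both parts directly from the definition of tightness, namely that for every $W \subseteq V$ and every $i \geq 0$ the map $\widetilde{H}_i(\Delta[W]) \to \widetilde{H}_i(\Delta)$ induced by the inclusion $\Delta[W] \hookrightarrow \Delta$ is injective. Both parts should be short; the only potential subtlety is making sure the induced-map argument respects functoriality in the right way.

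For part (i), I would factor the inclusion $\Delta[U] \hookrightarrow \Delta$ through $\Delta[W]$, since $U \subseteq W \subseteq V$ yields $\Delta[U] \subseteq \Delta[W] \subseteq \Delta$. By functoriality of reduced homology, the composition
\begin{equation*}
\widetilde{H}_i(\Delta[U]) \longrightarrow \widetilde{H}_i(\Delta[W]) \longrightarrow \widetilde{H}_i(\Delta)
\end{equation*}
agrees with the single map induced by the inclusion $\Delta[U] \hookrightarrow \Delta$. Tightness of $\Delta$ tells us this composition is injective, so the first arrow must be injective as well. This handles (i) with no further input.

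For part (ii), suppose for contradiction that $\sigma$ is a missing $k$-face of $\Delta$, and view $\sigma$ as a subset of $V$. Since every proper subset of $\sigma$ lies in $\Delta$ while $\sigma$ itself does not, the induced subcomplex $\Delta[\sigma]$ is the boundary of the $k$-simplex on $\sigma$, hence homeomorphic to $S^{k-1}$, so $\widetilde{H}_{k-1}(\Delta[\sigma]) \cong \FF$ as noted just before the lemma. Tightness gives an injective map $\widetilde{H}_{k-1}(\Delta[\sigma]) \to \widetilde{H}_{k-1}(\Delta)$. But $\beta_{k-1}(\Delta) = 0$ means $\widetilde{H}_{k-1}(\Delta) = 0$, which is incompatible with receiving an injection from a nonzero vector space. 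Therefore no such missing $k$-face exists.

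Neither step looks genuinely hard, so I do not anticipate a real obstacle; the only thing to watch is to state the hypothesis "missing $k$-face" precisely so that $\Delta[\sigma] = \partial \overline{\sigma}$ is justified, and to note that $\widetilde{H}_i(-)$ is covariantly functorial under inclusion of induced subcomplexes, which is automatic from the simplicial chain functor.
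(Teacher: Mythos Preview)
Your proof is correct and matches the paper's approach; the paper simply states that the lemma follows from the definition of tightness and omits any further argument, so your proposal fills in exactly the intended details (factoring the inclusion through $\Delta[W]$ for (i), and using $\widetilde H_{k-1}(\Delta[\sigma])\cong\FF$ together with injectivity for (ii)).
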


For a simplicial complex $\Delta$, we identify its {\em $1$-skeleton} ${\rm Skel}_1(\Delta) = \{\sigma \in \Delta
: \dim(\sigma) \leq 1\}$ with the simple graph whose vertex set is the set of the vertices of $\Delta$ and whose
edge set is the set of the edges (1-simplices) in $\Delta$. We say that a simple graph $G$ is {\em chordal} if it
has no induced cycle of length $\geq 4$. The following result is due to Kalai \cite[Theorem 8.5]{Ka}.

\begin{proposition}[Kalai] \label{prop:Ka1987}
Let $\Delta$ be a homology $(d-1)$-sphere with $d \geq 3$. Then $\Delta$ is stacked if and only if the
$1$-skeleton of $\Delta$ is chordal and $\Delta$ has no missing $k$-faces for $1 < k < d-1$.
\end{proposition}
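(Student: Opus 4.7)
I would establish the two directions separately, each by induction on an appropriate parameter.

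For the forward direction, I use the fact that every stacked $(d-1)$-sphere arises from the boundary of a $d$-simplex by iterated stellar subdivision of facets: this follows from Lemma \ref{lemma:3.9} together with the observation that each simplicial connected union of a stacked $d$-ball with a new $d$-simplex along a boundary facet $F$ translates, on boundaries, into replacing $F$ by the cone $\{v\}\ast\partial\overline{F}$. I would then induct on the number of stellar subdivisions. The base case $\partial\overline{\sigma}$ of a $d$-simplex $\sigma$ has $1$-skeleton $K_{d+1}$ (chordal) and no missing faces at all. For the inductive step, the new vertex $v$ is adjacent to exactly the $d$ vertices of $F$; any induced cycle in ${\rm Skel}_1(\Delta)$ through $v$ contains two vertices $a,b\in F$, and the edge $\{a,b\}$ (present because $F$ is a clique) supplies a chord, ruling out induced cycles of length $\ge 4$. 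A newly created missing $k$-face either equals $F$ itself (a permitted missing $(d-1)$-face) or contains $v$; in the latter case the explicit description of the star of $v$ forces $\sigma\setminus\{v\}\subseteq F$, which makes $\sigma$ itself a face, a contradiction.

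For the backward direction, I would induct on $f_0(\Delta)$. If $f_0(\Delta)=d+1$, then $\Delta=\partial\overline{\sigma}$ for some $d$-simplex $\sigma$, hence stacked. Otherwise, chordality of ${\rm Skel}_1(\Delta)$ yields, via Dirac's theorem on chordal graphs, a simplicial vertex $v$ whose neighborhood $N(v)$ is a clique of ${\rm Skel}_1(\Delta)$. Combining this with the hypothesis of no missing $k$-faces for $1<k<d-1$, an induction on $|\sigma|$ shows that every $\sigma\subseteq N(v)$ with $|\sigma|\le d-1$ lies in $\Delta$, and that every $\{v\}\cup\sigma$ with $\sigma\subseteq N(v)$ of size $\le d-2$ also lies in $\Delta$. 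Consequently the homology $(d-2)$-sphere $\lk_\Delta(v)$ contains the full $(d-3)$-skeleton of the simplex on $N(v)$. Choosing $v$ with smallest possible $|N(v)|$ among simplicial vertices, I would argue $|N(v)|=d$, whence $\lk_\Delta(v)=\partial\overline{N(v)}$. The complex $\Delta'$ obtained from $\Delta$ by the inverse stellar subdivision at $v$ (removing $\st_\Delta(v)$ and gluing in $\overline{N(v)}$) is then a homology $(d-1)$-sphere with $f_0(\Delta)-1$ vertices, whose $1$-skeleton is an induced subgraph of ${\rm Skel}_1(\Delta)$ (hence still chordal) and in which no missing $k$-face for $1<k<d-1$ is introduced. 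By induction $\Delta'$ is stacked, and consequently so is $\Delta$.

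The main obstacle is the step in the backward direction that asserts $|N(v)|=d$. Chordality guarantees the existence of simplicial vertices, and the no-missing-face hypothesis forces a substantial portion of the simplex on $N(v)$ into $\lk_\Delta(v)$; however, ruling out $|N(v)|>d$ requires delicately combining the homology-sphere structure of the link with the minimality choice of $v$ and with global constraints on $\Delta$ (e.g.\ the fact, recoverable from the inductive description of the forward direction, that a neighborly stacked $(d-1)$-sphere must have exactly $d+1$ vertices). A secondary but similar concern is verifying that the inverse stellar subdivision preserves the no-missing-face condition, which I expect to follow from a careful comparison of $\st_\Delta(v)$ with the newly inserted facet $N(v)$ in $\Delta'$.
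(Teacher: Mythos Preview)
This proposition is not proved in the paper at all; it is quoted as a result of Kalai \cite[Theorem 8.5]{Ka} and used as a black box, so there is no argument here to compare against. For the record, Kalai's original proof proceeds through generic rigidity theory and the equality analysis of the Lower Bound Theorem, not through Dirac's theorem and inverse stellar moves as you propose.

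On the substance of your attempt: the forward direction is fine, but the gap you yourself flag in the backward direction is real and is not resolved by merely choosing $v$ of minimal degree among simplicial vertices. What actually closes it for $d\ge 4$ is a direct computation on the link. You have already shown that $\lk_\Delta(v)$ is a homology $(d-2)$-sphere containing the complete $(d-3)$-skeleton on $|N(v)|$ vertices; the Dehn--Sommerville relation $h_1=h_{d-2}$ for that link then forces $|N(v)|=d$, independently of which simplicial vertex was chosen, so no minimality argument is needed. The case $d=3$ is genuinely different: there the missing-face hypothesis is vacuous and the link is only a cycle, so the $(d-3)$-skeleton observation gives nothing; one must argue separately (e.g.\ via the classical identification of stacked $2$-spheres with simplicial $2$-spheres whose $1$-skeleton is a planar $3$-tree, or by combining a perfect elimination ordering with an edge count). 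Your secondary concern---that the inverse stellar move might create a forbidden missing face---disappears once $|N(v)|=d$: the only face added to $\Delta'$ that was absent from $\Delta$ is $N(v)$ itself, which has dimension $d-1$, so any missing $k$-face of $\Delta'$ with $1<k<d-1$ is already a missing $k$-face of $\Delta$.
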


Let $\Delta$ be a closed homology manifold of dimension $d \geq 3$. We say that $\Delta$ is {\em
tight-neighborly} if $\binom{f_0(\Delta) -d -1}{2}= \binom{d+2}{2} \beta_1(\Delta;\FF)$. Since $\binom{f_0}{2}
-(d+1)f_0 + \binom{d+2}{2}= \binom{f_0 -d -1}{2}$, $\Delta$ is tight-neighborly if and only if $\Delta$ is
stacked and neighborly by Proposition \ref{prop:NS2009} (see Remark \ref{nonorientable} for the non-orientable case). Note that the latter condition says that tight-neighborliness does not depend on the choice of a field $\FF$. Here we prove the following.

\begin{theorem} \label{theo:main.4.3}
Let $\Delta$ be a tight, connected, closed, homology manifold of dimension $d \geq 4$ such that $\beta_i(\Delta)=0$
for $1 <i < d-1$. Then $\Delta$ is locally stacked.
\end{theorem}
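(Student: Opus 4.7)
The plan is to show that for every vertex $v$, the link $\lk_\Delta(v)$ is a stacked $(d-1)$-sphere, which is precisely the statement that $\Delta$ is locally stacked. Since each $\lk_\Delta(v)$ is a homology $(d-1)$-sphere, Proposition \ref{prop:Ka1987} (Kalai) reduces the task to verifying two conditions on each link: (a) no missing $k$-face for $1 < k < d-1$, and (b) the $1$-skeleton is chordal. A useful preliminary observation is that $\Delta$ is neighborly, since $\beta_0(\Delta) = 0$ forbids missing $1$-faces of $\Delta$ by Lemma \ref{lemma:4.1}(ii).

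For (a), I will proceed by case analysis. Let $\sigma$ be a missing $k$-face of $\lk_\Delta(v)$ with $1 < k < d-1$. If $\sigma \in \Delta$, then $\{v\} \cup \sigma$ is a missing $(k+1)$-face of $\Delta$ (each $\tau \subsetneq \sigma$ lies in $\lk_\Delta(v)$, giving $\{v\} \cup \tau \in \Delta$, while $\sigma \in \Delta$ handles the subsets not containing $v$), so $\beta_k(\Delta) \neq 0$ by Lemma \ref{lemma:4.1}(ii), contradicting the hypothesis. If $\sigma \notin \Delta$, then $\sigma$ itself is a missing $k$-face of $\Delta$, so $\beta_{k-1}(\Delta) \neq 0$; this contradicts the hypothesis when $k \geq 3$. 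The remaining subcase $k = 2$ with $\sigma = \{a,b,c\} \notin \Delta$ will be handled by a direct tightness argument: set $W = \sigma$ and $W' = \sigma \cup \{v\}$. Then $\Delta[W]$ consists of three edges with no $2$-face, so $\widetilde H_1(\Delta[W]) \cong \FF$, while $\Delta[W']$ is the $2$-disk $v * \partial \overline{\sigma}$ (three cone triangles are present and $\{v\}\cup\sigma \notin \Delta$), so $\widetilde H_1(\Delta[W']) = 0$. This contradicts the injectivity supplied by Lemma \ref{lemma:4.1}(i).

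For (b), assume toward contradiction that $\lk_\Delta(v)$ has an induced cycle $C = (u_1, \ldots, u_m)$ of length $m \geq 4$ in its $1$-skeleton, and set $W = V(C)$, $W' = W \cup \{v\}$. Chordlessness of $C$ together with $m \geq 4$ forces $\lk_\Delta(v)[W] = C$ (no three vertices of $C$ are pairwise joined in $\lk_\Delta(v)$, because some pair among any three is non-consecutive), which gives
\[
\Delta[W'] = \Delta[W] \cup (v * C), \qquad \Delta[W] \cap (v * C) = C.
\]
The Mayer--Vietoris sequence for this decomposition, using $\widetilde H_\ast(v * C) = 0$, contains the fragment
\[
\widetilde H_2(\Delta[W']) \longrightarrow \widetilde H_1(C) \longrightarrow \widetilde H_1(\Delta[W]) \longrightarrow \widetilde H_1(\Delta[W']).
\]
By Lemma \ref{lemma:4.1}(i) the right-hand arrow is injective, so the middle arrow vanishes, and hence the connecting map $\widetilde H_2(\Delta[W']) \to \widetilde H_1(C) \cong \FF$ is surjective. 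Therefore $\widetilde H_2(\Delta[W']) \neq 0$, and one more application of Lemma \ref{lemma:4.1}(i) produces a nonzero class in $\widetilde H_2(\Delta)$, contradicting $\beta_2(\Delta) = 0$ (this is where $d \geq 4$ is essential, since it places $2$ in the range $1 < i < d-1$). I expect this Mayer--Vietoris step to be the main obstacle: the heart of the argument is the promotion of a $1$-dimensional homological obstruction living inside a vertex link into a genuine degree-$2$ class in an induced subcomplex of $\Delta$, where the vanishing hypothesis on $\beta_2(\Delta)$ can finally be invoked. Verifying that the intersection $\Delta[W] \cap (v * C)$ is exactly $C$---which uses both chordlessness and $m \geq 4$, without which stray $\lk_\Delta(v)$-triangles inside $W$ could spoil the decomposition---is the technical point that must be set up carefully.
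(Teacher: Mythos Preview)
Your proof is correct and follows essentially the same route as the paper's: verify Kalai's criterion (Proposition~\ref{prop:Ka1987}) on each vertex link by ruling out missing faces and induced long cycles via tightness, with the Mayer--Vietoris decomposition $\Delta[W\cup\{v\}]=\Delta[W]\cup(v*C)$ doing the main work. The only organisational difference is that the paper proves a single claim---no induced subcomplex of $\lk_\Delta(v)$ is a cycle of \emph{any} length $\geq 3$---and then invokes it both for chordality of the $1$-skeleton (cycles of length $\geq 4$) and for the absence of missing $2$-faces (cycles of length $3$); you instead dispose of the $k=2$, $\sigma\notin\Delta$ subcase by the direct computation $\Delta[\sigma]\hookrightarrow\Delta[\sigma\cup\{v\}]=v*\partial\overline\sigma$, which is the same cone argument specialised to a $3$-cycle without writing down Mayer--Vietoris.
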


\begin{proof}
Let $v$ be a vertex of $\Delta$. We prove that $\lk_\Delta(v)$ is stacked.

We first claim that no induced subcomplex of $\lk_\Delta(v)$ can be a $1$-dimensional simplicial complex which
forms a cycle. Suppose to the contrary that $\lk_\Delta(v)[W]$ is a cycle for some $W$. Let $C=\lk_\Delta(v)[W]$
and $v\ast C=C \cup \{ \{v\} \cup \sigma: \sigma \in C\}$. Then we have $\Delta[{W \cup \{v\}}]= \Delta[W] \cup
(v\ast C)$ and $\Delta[W] \cap (v\ast C)=C$. Consider the Mayer--Vietoris exact sequence
\begin{align}
\widetilde H_2(\Delta[{W \cup \{v\}}]) & \longrightarrow \widetilde H_1(C) \longrightarrow \widetilde
H_1(\Delta[W]) \oplus \widetilde H_1(v\ast C) \stackrel \varphi \longrightarrow \widetilde H_1(\Delta[{W \cup
\{v\}}]). \nonumber
\end{align}
Since $\Delta$ is tight and $\beta_2(\Delta)=0$, we have $\widetilde H_2(\Delta[{W\cup\{v\}}])=0$. Then since
$\widetilde H_1(C) \ne 0$, the map $\varphi$ has a non-trivial kernel. However, since $\widetilde H_1(v\ast C)=0$,
this contradicts the tightness of $\Delta$ as it implies that $\varphi$ is injective by Lemma \ref{lemma:4.1}(i).
Hence no induced subcomplex of $\lk_\Delta(v)$ can be a cycle.

Now we prove the statement. By Lemma \ref{lemma:4.1}(ii), $\Delta$ has no missing $k$-faces for $2<k<d$. This
implies that $\lk_\Delta(v)$ has no missing $k$-faces for $2 < k <d-1$. Also, $\lk_\Delta(v)$ has no missing
$2$-faces since if it has a missing $2$-face $\sigma$ then $\lk_\Delta(v)[\sigma]$ is a cycle of length $3$.
Similarly, the $1$-skeleton of $\lk_\Delta(v)$ is a chordal graph since if it has an induced cycle of length
$\geq 4$ with the vertex set $W$, then $\lk_\Delta(v)[W]$ is a cycle. Thus, by Proposition \ref{prop:Ka1987},
$\lk_\Delta(v)$ is stacked.
\end{proof}

For any field, a tight homology manifold is orientable (cf.\ \cite{BDtight}). From Theorem \ref{theo:main.4.3}
and all the known results, we have the following.

\begin{corollary}
\label{cor4.4}
Let $\Delta$ be a closed, orientable, homology manifold of
dimension $d\geq 4$. Then the following are equivalent.
\begin{enumerate}[{\rm (i)}]
\item $\Delta$ is tight-neighborly.

\item $\Delta$ is a neighborly member of $\mathcal{H}^{d+1}$.

\item $\Delta$ is neighborly and stacked.

\item $\Delta$ is neighborly and locally stacked.

\item $\Delta$ is tight and $\beta_i(\Delta)=0$ for $1< i< d-1$.
\end{enumerate}
\end{corollary}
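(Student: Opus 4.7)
My plan is to verify the cycle $\mathrm{(i)} \Leftrightarrow \mathrm{(iii)} \Leftrightarrow \mathrm{(ii)} \Leftrightarrow \mathrm{(iv)}$ and then close the loop by establishing $\mathrm{(v)} \Rightarrow \mathrm{(iv)}$ and $\mathrm{(i)} \Rightarrow \mathrm{(v)}$. Each individual arrow is a direct invocation of a result already on hand, so the proof is mostly a bookkeeping exercise.

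The equivalence $\mathrm{(i)} \Leftrightarrow \mathrm{(iii)}$ is already stated at the beginning of this section as a consequence of Proposition~\ref{prop:NS2009} and the arithmetic identity $\binom{f_0}{2} - (d+1)f_0 + \binom{d+2}{2} = \binom{f_0-d-1}{2}$, combined with the universal bound $f_1 \leq \binom{f_0}{2}$ which forces neighborliness once the tight-neighborly equation holds. The equivalence $\mathrm{(ii)} \Leftrightarrow \mathrm{(iii)}$ is exactly Corollary~\ref{cor:main.3.13}, and $\mathrm{(iii)} \Leftrightarrow \mathrm{(iv)}$ is the coincidence of stackedness and local stackedness for closed homology manifolds of dimension $d \geq 4$ recalled in the introduction (from \cite{BDkst,MN}).

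For $\mathrm{(v)} \Rightarrow \mathrm{(iv)}$, the tightness of $\Delta$ already forces neighborliness (as noted in the preliminaries), while Theorem~\ref{theo:main.4.3} converts the remaining hypotheses of (v) directly into local stackedness. Conversely, for $\mathrm{(i)} \Rightarrow \mathrm{(v)}$ I would invoke Effenberger's theorem \cite[Corollary~4.4]{Ef}, which asserts that every tight-neighborly triangulation of dimension $d \geq 4$ is $(\ZZ/2\ZZ)$-tight, hence tight. To obtain the middle-degree Betti vanishing, use the already-established $\mathrm{(i)} \Leftrightarrow \mathrm{(ii)}$ to place $\Delta$ in $\mathcal{H}^{d+1}$, and then apply the topological classification recalled in the remark following Corollary~\ref{cor:NS2009.problem}: such a $\Delta$ is homeomorphic either to $S^d$ or to a connected sum of (possibly twisted) $S^{d-1}$-bundles over $S^1$, all of which have $\beta_i = 0$ for $1 < i < d-1$ when $d \geq 4$.

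The main (slight) obstacle is coordinating the external ingredients so that Effenberger's theorem and the topological classification of $\mathcal{H}^{d+1}$ are each used in the correct direction; aside from these two citations, every implication is an immediate consequence of material established earlier in this paper.
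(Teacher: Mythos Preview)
Your argument is correct and follows essentially the same architecture as the paper's proof: the equivalences among (i)--(iv) are assembled from Proposition~\ref{prop:NS2009}, Corollary~\ref{cor:main.3.13}, and the known coincidence of stackedness and local stackedness in dimension $\geq 4$, while (v)~$\Rightarrow$~(iv) is Theorem~\ref{theo:main.4.3} together with the fact that tight complexes are neighborly. The only substantive difference is in the direction towards (v): the paper derives tightness from \cite[Theorem~3.11]{BDtight} (applied to a neighborly, locally stacked manifold), whereas you invoke Effenberger's theorem \cite[Corollary~4.4]{Ef} on tight-neighborly triangulations; both are legitimate external inputs here, and the Betti-vanishing part is the same observation in either case (membership in $\mathcal H^{d+1}$ forces $\beta_i=0$ for $1<i<d-1$). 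Your route through \cite{BDkst,MN} for (iii)~$\Leftrightarrow$~(iv) in place of the paper's citation of Kalai for (ii)~$\Leftrightarrow$~(iv) is likewise a harmless variation.
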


\begin{proof}
The equivalence (i) $\Leftrightarrow$ (ii) follows from Proposition  \ref{prop:NS2009}, (ii) $\Leftrightarrow$
(iii) follows from Corollary \ref{cor:main.3.13}, and (ii) $\Leftrightarrow$ (iv) follows from Kalai's result
\cite[Corollary 8.4]{Ka}. Now, (v) $\Rightarrow$ (iv) follows from Theorem \ref{theo:main.4.3}. Since $\Delta \in
\mathcal{H}^{d+1}$ implies $\beta_i(\Delta)=0$ for $1 < i < d-1$, (ii) \& (iv) $\Rightarrow$ (v) follows from
\cite[Theorem 3.11]{BDtight}. This completes the proof.
\end{proof}

From the equivalence of (i) and (v) in Corollary \ref{cor4.4} it follows that tight triangulations of connected
sums of $S^{d-1}$-bundles over $S^1$ are tight-neighborly for $d \geq 4$. This answers a question asked by
Effenberger \cite[Question 4.5]{Ef}.

It would be natural to ask if the results in this section hold in dimension $3$.
Very recently, Bagchi, Spreer and the first author \cite{BDS2}
proved the following result (which answers a question asked in a previous version of this paper).

\begin{proposition}[Bagchi--Datta--Spreer] \label{prop:BDS2016}
A closed triangulated $3$-manifold $M$ is $\FF$-tight if and only if $M$ is $\FF$-orientable, neighbourly and
stacked.
\end{proposition}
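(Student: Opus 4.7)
The plan is to handle the two implications separately, with the harder direction being ``$\FF$-tight implies stacked.''

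For the ``if'' direction, suppose $M$ is $\FF$-orientable, neighbourly, and stacked. By Corollary \ref{cor:main.3.13}, $M$ lies in $\mathcal{H}^4$ and so can be assembled from $\partial\overline{\sigma}$ for a $4$-simplex $\sigma$ by a sequence of combinatorial handle additions. The boundary of a $4$-simplex is trivially $\FF$-tight. I would then argue inductively that an admissible combinatorial handle addition preserves $\FF$-tightness as long as one controls the ambient neighbourliness, using a Mayer--Vietoris comparison of induced subcomplexes of $\Delta$ and $\Delta^{\psi}$ before and after identification of $\sigma$ with $\tau$. The $\FF$-orientability hypothesis enters to ensure the Poincaré--Lefschetz isomorphisms needed to relate the relevant homology groups are available.

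For the converse, a tight simplicial complex is connected, neighbourly, and (by the results cited from \cite{BDtight}) $\FF$-orientable, so it remains only to prove stackedness. By Corollary \ref{cor:NS2009.problem} this reduces to verifying the numerical identity $f_1(M) - 4 f_0(M) + 10 = 10 \beta_1(M)$, which, after substituting $f_1(M) = \binom{f_0(M)}{2}$ from neighbourliness, rewrites as $\binom{f_0(M) - 4}{2} = 10 \beta_1(M)$. The strategy is to extract this equality from tightness by an enumerative argument in the spirit of Bagchi's proof of Proposition \ref{prop:Ba2016}: bound $\sum_W \dim \widetilde H_i(M[W])$ over suitable families of induced subcomplexes $M[W]$ using tightness on one side and the simplicial structure on the other, then match the two counts via the Dehn--Sommerville relations for closed $3$-manifolds (in particular $f_2 = 2 f_3$ and the Euler relation).

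The principal obstacle, and the reason this requires a theorem separate from Theorem \ref{theo:main.4.3}, is that the local Mayer--Vietoris argument used there to rule out induced cycles in vertex links breaks down in dimension three: $\FF$-orientability forces $\beta_2(M) = \beta_1(M)$ by Poincaré--Lefschetz duality, so the crucial vanishing $\widetilde H_2(M[W \cup \{v\}]) = 0$ that underlies the proof of Theorem \ref{theo:main.4.3} is simply unavailable. One therefore cannot hope to prove that each $\lk_M(v)$ is a stacked $2$-sphere purely by local Mayer--Vietoris, and a genuinely global enumerative argument, using the whole vertex set rather than individual links, is needed to access the $f$-vector identity.
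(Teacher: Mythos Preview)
The paper does not prove this proposition. It is quoted as a result of Bagchi, Datta, and Spreer from the separate paper \cite{BDS2}; the sentence immediately preceding the statement reads ``Very recently, Bagchi, Spreer and the first author \cite{BDS2} proved the following result.'' There is therefore no proof in the present paper to compare your proposal against.

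On the merits of your outline: for the ``if'' direction, the inductive handle-addition argument you propose is both unnecessary and unsound as stated. It is unnecessary because the implication (neighbourly and locally stacked $\Rightarrow$ $\FF$-tight) is precisely what \cite[Theorem 3.11]{BDtight} supplies, as already invoked in the proof of Corollary~\ref{cor4.4}; and stacked implies locally stacked. It is unsound because the intermediate complexes in a handle-addition sequence building up a neighbourly member of $\mathcal H^4$ are generally \emph{not} neighbourly (each handle addition decreases the vertex count while keeping the edge count fixed), so the hypothesis you would need to carry along the induction is not preserved.

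For the ``only if'' direction, your diagnosis of why the Mayer--Vietoris argument of Theorem~\ref{theo:main.4.3} collapses in dimension~$3$ is exactly right, and your instinct that a global argument replacing the local link analysis is required is correct. But your proposal stops at the level of a strategy (``bound $\sum_W \dim \widetilde H_i(M[W])$ two ways and match via Dehn--Sommerville'') without indicating what the two sides actually are or why they coincide; the content of \cite{BDS2} lies precisely in making such an argument go through, and it is not a routine count. So what you have written is a plausible heuristic, not yet a proof sketch.
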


As a consequence of Proposition \ref{prop:BDS2016} and Corollary
\ref{cor:NS2009.problem} we get the following (compare Corollaries 1.4 and 1.5 of \cite{BDS2}).

\begin{corollary} \label{cor:BDS2016}
Let $\Delta$ be an $\FF$-orientable, closed, triangulated $3$-manifold. The following conditions are equivalent.
\begin{enumerate}[{\rm (i)}]
\item $\Delta$ is $\FF$-tight.

\item $\Delta$ is neighborly and stacked.

\item $\Delta$ is a neighborly member of $\mathcal{H}^{4}$.

\item $(f_0(\Delta)-4)(f_0(\Delta)-5) = 20\beta_1(\Delta; \FF)$.
\end{enumerate}
\end{corollary}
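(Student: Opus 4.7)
The plan is to derive the fourfold equivalence formally from Proposition~\ref{prop:BDS2016} and Corollary~\ref{cor:NS2009.problem}, with only a short arithmetic manipulation tying in condition (iv). Observe first that we may assume $\Delta$ is connected: tightness builds this in, a neighborly complex on at least two vertices is connected, every element of $\mathcal{H}^4$ is connected by construction, and in case (iv) connectedness will follow once we have extracted neighborliness from the argument below.

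The equivalences (i)$\Leftrightarrow$(ii) and (ii)$\Leftrightarrow$(iii) are essentially instant. For the first, Proposition~\ref{prop:BDS2016} characterizes $\FF$-tight closed triangulated $3$-manifolds as exactly the $\FF$-orientable, neighborly, stacked ones; $\FF$-orientability is part of our standing hypothesis. For the second, Corollary~\ref{cor:NS2009.problem}, applied to the connected closed homology $3$-manifold $\Delta$, says that $\Delta$ is stacked if and only if $\Delta\in\mathcal{H}^4$, and appending ``neighborly'' to both sides produces (ii)$\Leftrightarrow$(iii).

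The main piece is (ii)$\Leftrightarrow$(iv). Here I would start from the elementary identity
\[
\binom{f_0(\Delta)}{2} - 4 f_0(\Delta) + 10 = \frac{(f_0(\Delta)-4)(f_0(\Delta)-5)}{2}.
\]
For the forward direction, if $\Delta$ is neighborly and stacked then $f_1(\Delta)=\binom{f_0(\Delta)}{2}$, while Corollary~\ref{cor:NS2009.problem} supplies $f_1(\Delta)-4f_0(\Delta)+10=10\beta_1(\Delta;\FF)$; substituting yields (iv). For the converse I would squeeze using the chain
\[
\frac{(f_0-4)(f_0-5)}{2} \;=\; \binom{f_0}{2}-4f_0+10 \;\geq\; f_1-4f_0+10 \;\geq\; 10\beta_1(\Delta;\FF),
\]
where the first inequality is the trivial $f_1\leq\binom{f_0}{2}$ and the second is the $d=3$ case of Proposition~\ref{prop:NS2009}, whose orientability hypothesis is already in force. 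If (iv) holds, both inequalities must collapse: the first gives $f_1=\binom{f_0}{2}$, i.e., neighborliness, and the second, via Corollary~\ref{cor:NS2009.problem}, gives stackedness.

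There is no genuine obstacle; the whole proof is a formal chase through earlier results. The only place that requires a brief moment of care is the direction (iv)$\Rightarrow$(ii), because neither neighborliness nor stackedness is known a priori and both must be teased out at once --- precisely what the two-sided squeeze accomplishes.
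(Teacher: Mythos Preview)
Your argument is correct and matches the paper's intended route: the paper gives no explicit proof, simply stating that the corollary is ``a consequence of Proposition~\ref{prop:BDS2016} and Corollary~\ref{cor:NS2009.problem}'', and your squeeze via Proposition~\ref{prop:NS2009} together with the identity $\binom{f_0}{2}-4f_0+10=\tfrac{(f_0-4)(f_0-5)}{2}$ is exactly how the paper has already established (in the paragraph preceding Theorem~\ref{theo:main.4.3}) that tight-neighborliness is equivalent to ``stacked and neighborly''. One small remark: your opening caveat about connectedness is unnecessary, since in this paper $\FF$-orientability is defined only for connected homology manifolds, so the hypothesis already forces $\Delta$ to be connected and your squeeze goes through without any circularity.
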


\medskip

\noindent \textbf{Acknowledgements}: The authors
thank Bhaskar Bagchi for many useful comments on a preliminary version of this paper. They also thank Eran Nevo
for letting them know that Lemma \ref{lemma:3.3} follows from the Alexander duality. The first author was
partially supported by DIICCSRTE, Australia (project AISRF06660) \& DST, India (DST/INT/AUS/P-56/2013 (G)) under
the Australia-India Strategic Research Fund and by the UGC Centre for Advanced Studies (F.510/6/CAS/2011
(SAP-I)). The second author was partially supported by JSPS KAKENHI 26400043.

{\small

}

\end{document}